\title{\bf \huge Optimal Placement of Virtual Inertia in Power Grids}
\author{Bala Kameshwar Poolla, Saverio Bolognani, Florian D\"orfler%
\thanks{This material is supported by ETH start-up funds and the SNF Assistant Professor Energy Grant \#160573.
B.K. Poolla, S. Bolognani, and F. D\"orfler are with the Automatic Control Laboratory at the Swiss Federal Institute of Technology (ETH) Z\"urich, Switzerland.
Emails: {\tt \{bpoolla,bsaverio,dorfler\}@ethz.ch}.} }
\newtheoremstyle{bfnote}%
{}{}%
{\itshape}{}%
{\bfseries}{.}%
{ }%
{\thmname{#1}\thmnumber{ #2}\thmnote{ (#3)}}
\theoremstyle{bfnote}
\definecolor{m}{RGB}{0,56,167}
\newtheorem{theorem}{Theorem}
\newtheorem{lemma}[theorem]{Lemma}
\newtheorem{corollary}[theorem]{Corollary}
\newtheorem{assumption}{Assumption}
\newcommand*\mc[0]{\mathcal}   
\newcommand{\until}[1]{\{1,\dots,#1\}}
\newcommand{\real}[0]{\mathbb R}
\DeclareSymbolFont{bbold}{U}{bbold}{m}{n}
\DeclareSymbolFontAlphabet{\mathbbold}{bbold}
\newcommand{\vectorones}[1][]{\mathds{1}_{#1}}
\newcommand{\vectorzeros}[1][]{\mathbbold{0}_{#1}}
\newcommand*\Rank[0]{\mbox{rank}}
\newcommand\diag[0]{\textup{diag}} 
\DeclareMathOperator{\trace}{Trace}
\tikzset{
>=stealth',
  punktchain/.style={rectangle, rounded corners, 
    draw=black, very thick,text width=10em, 
    minimum height=3em, text centered, on chain},
  line/.style={draw, thick, <-},
  element/.style={tape,top color=white,bottom color=blue!50!black!60!,
    minimum width=8em,draw=blue!40!black!90, very thick,
    text width=10em, minimum height=3.5em, text centered, on chain},
  every join/.style={->, thick,shorten >=1pt},
  tuborg/.style={decorate},
  tubnode/.style={midway, right=2pt},
}
\pgfplotsset{compat=1.6,
        scaled x ticks = false, 
        xticklabel style={/pgf/number format/fixed,/pgf/number format/precision=3},
        } 
\newcommand\oprocendsymbol{\hbox{$\square$}}
\newcommand\oprocend{\relax\ifmmode\else\unskip\hfill\fi\oprocendsymbol}
\def\QEDopen{{\setlength{\fboxsep}{0pt}\setlength{\fboxrule}{0.2pt}\fbox{\rule[0pt]{0pt}{1.3ex}\rule[0pt]{1.3ex}{0pt}}}}
\def\QED{\QEDopen} 
\def\extraconstraint{\ \ P z_0 = \vectorzeros[2n]}
\begin{document}
\maketitle
\thispagestyle{empty}
\pagestyle{empty}

\begin{abstract}
A major transition in the operation of electric power grids is the replacement of synchronous machines by distributed generation connected via power electronic converters. The accompanying ``loss of rotational inertia'' and the fluctuations by renewable sources jeopardize the system stability, as testified by the ever-growing number of frequency incidents. As a remedy, numerous studies demonstrate how virtual inertia can be emulated through various devices, but few of them address the question of ``where'' to place this inertia. It is, however, strongly believed that the placement of virtual inertia hugely impacts system efficiency, as demonstrated by recent case studies. In this article, we carry out a comprehensive analysis in an attempt to address the optimal inertia placement problem.
We consider a linear network-reduced power system model along with an $\mathscr{H}_2$ performance metric accounting for the network coherency. The optimal inertia placement problem turns out to be non-convex, yet we provide a set of closed-form global optimality results for particular problem instances as well as a computational approach resulting in locally optimal solutions. Further, we also consider the robust inertia allocation problem, wherein the optimization is carried out accounting for the worst-case disturbance location. We illustrate our results with a three-region power grid case study and compare our locally optimal solution with different placement heuristics in terms of different performance metrics. 
\end{abstract}


\section{Introduction}

As we retire more and more synchronous machines and replace them with renewable sources interfaced with power electronic devices, the stability of the power grid is jeopardized. This has been recognized as one of the prime concerns by  transmission system operators \cite{WW-KE-GB-KJ:15,MM-BF-BK-MS:15}. Both in transmission grids as well as in microgrids, low inertia levels together with variable renewable generation lead to large frequency swings.

Not only are low levels of inertia troublesome, but particularly spatially heterogeneous and time-varying inertia profiles can lead to destabilizing effects, as shown in an interesting two-area case study \cite{AU-TB-GA:14}. It is not surprising that rotational inertia has been recognized as a key ancillary service for power system stability, and a plethora of mechanisms have been proposed for inertia emulation (also known as virtual or synthetic inertia) \cite{SN-SD-MCC:13,HB-TI-YM:14,SD-JA:13} through a variety of devices,  ranging from wind turbine control \cite{MJ-SWHDH-WLK-FJA:06} over flywheels to batteries \cite{MK-TB-AU-GA:15}. Inertia monitoring \cite{PMA-CSS-GAT-AMC-MEB:15} and markets have also been suggested \cite{EE-GV-AT-BK-MM-MO:14}. In this article, we pursue the questions raised in \cite{AU-TB-GA:14} regarding the detrimental effects of spatially heterogeneous inertia profiles, and how they can be alleviated by inertia emulation throughout the grid. In particular, we are interested in the allocation problem ``where to optimally place the inertia''\,?

The problem of inertia allocation has been hinted at before  \cite{AU-TB-GA:14}, but we are aware only of the study \cite{TSB-TL-DJH:15} explicitly addressing the problem. In \cite{TSB-TL-DJH:15}, the grid is modeled by the linearized swing equations, and eigenvalue damping ratios as well as transient overshoots (estimated from the system modes) are chosen as optimization criteria for placing virtual inertia and damping. The resulting problem is non-convex, but a sequence of approximations leads to some insightful results. 

In comparison to \cite{TSB-TL-DJH:15}, we focus on network coherency as an alternative performance metric, that is, the amplification of stochastic or impulsive disturbances via a quadratic performance index measured by the $\mathscr H_{2}$ norm \cite{KZ-JCD-KG:96}. As performance index, we choose a classic coherency criterion penalizing angular differences and frequency excursions, which has recently been popularized for consensus and synchronization studies \cite{EL-SZ:12,BB-MRJ-PM-SP:12,MF-FL-MRJ:14,MF-XZ-FL-MRJ:14,TS-IS-JL-FD:15,MS-NM:14}  as well as in power system analysis and control \cite{ES-BB-DFG:15,FD-MJ-MC-FB:13a,XW-FD-MJ:15a}. We feel that this $\mathscr{H}_{2}$ performance metric is not only more tractable than spectral metrics, but it is also very meaningful for the problem at hand: it measures the effect of stochastic fluctuations (caused by loads and/or variable renewable generation) as well as impulsive events (such as faults or deterministic frequency errors caused by markets) and quantifies their amplification by a coherency index directly related to frequency volatility. Finally, in comparison to \cite{TSB-TL-DJH:15},  the damping or droop  coefficients are not decision variables in our problem setup, since these are determined by the system physics (in case of damping), the outcome of primary reserve markets (in case of primary control), or scheduled according to cost coefficients, ratings, or grid-code requirements \cite{FD-JWSP-FB:14a}. 

The contributions of this paper are as follows. We provide a comprehensive {modeling and analysis} framework for the inertia placement problem in power grids to optimize an $\mathscr{H}_2$ coherency index subject to capacity and budget constraints.
The optimal inertia placement problem is characteristically non-convex, yet we are able to provide explicit upper and lower bounds on the performance index. Additionally, we show that the problem admits an elegant and strictly convex reformulation for a performance index reflecting the effort of primary control which is often advocated as a remedy to low-inertia stability issues. In this case, the optimal inertia placement problem reduces to a standard resource allocation problem, where the cost of each resource is proportional to the ratio of expected disturbance over inertia. 

A similar simplification of the problem is obtained under some reasonable assumptions on the ratio between the disturbance and the damping coefficient at every node.
For the case of a two-area network, a closed-form analysis is possible, and a series of observations are discussed.

Furthermore, we develop a computational approach based on a gradient formula that allows us to find a locally optimal solution for large networks and arbitrary parameters.
We show how the combinatorial problem of allocating a limited number of inertia-emulating units can be also incorporated into this numerical method via a sparsity-promoting approach. Finally, any system norm such as  $\mathscr H_{2}$ assumes that the location of the disturbance (or a distribution thereof) is known. While empirical fault distributions are usually known based on historical data, the truly problematic faults in power grids are rare events that are poorly captured by any  disturbance distribution. To safeguard against such faults, we also present a robust formulation of the inertia allocation problem in which we optimize the $\mathscr{H}_2$ norm with respect to the worst possible disturbance.

A detailed three-region network has been adopted as a case study for the presentation of the proposed method. The numerical results are also illustrated via time-domain simulations, that demonstrate how an optimization-based allocation exhibits superior performance (in different performance metrics) compared to heuristic placements, and, perhaps surprisingly, the optimal allocation also uses less effort to emulate inertia.

From the methodological point of view, this paper extends the $\mathscr H_{2}$ performance analysis of second-order consensus systems to non-uniform damping, inertia, and input matrices (disturbance location). This technical contribution is essential for the application that we are considering, as these parameters dictate the optimal inertia allocation in an intertwined way.

The remainder of this section introduces some notation. Section \ref{Section: Problem Formulation} motivates our system model and the coherency performance index. Section \ref{Section: Optimal inertia allocation} presents numerical inertia allocation algorithms for general networks and provides explicit results for certain instances of cost functions and problem scenarios. Section \ref{Section: Case study} presents a case study on a three-region network accompanied with time-domain simulations and a spectral analysis. Finally, Section \ref{Section: Conclusions} concludes the paper.

\paragraph*{Notation}

We denote the $n$-dimensional vectors of all ones and zeros by $\vectorones[n]$ and $\vectorzeros[n]$.
Given an index set $\mathscr{I}$ with cardinality $|\mathscr{I}|$ and a real-valued array $\{{x_1}, \dots, x_{|\mathscr{I}|}\}$, we denote by $x \in \real^{|\mathscr{I}|}$ the vector obtained by stacking the scalars $x_i$ and by $\text{diag}\{{x_i}\}$ the associated diagonal matrix.
The vector $\mathbbold e_{i}$ is the $i$-th vector of the canonical basis for $\real^n$.
\section{Problem Formulation}
\label{Section: Problem Formulation}

\subsection{System model}
Consider a power network modeled by a graph with nodes (buses) $\mathcal{V} =\{1, \dots, n\}$ and edges (transmission lines)  $\mathcal{E} \subseteq \mathcal{V} \times \mathcal{V}$. We consider a small-signal version of a network-reduced power system model \cite{PWS-MAP:98,PK:94}, where passive loads are eliminated via  Kron reduction \cite{FD-FB:11d}, and the network is reduced to active buses $i$ with linearized dynamics
\begin{equation}
\label{eq:basic}
m_{i} \ddot \theta_{i} + d_{i} \dot \theta_{i} = {p_{\text{in}, i} - p_{e,i}}\, , \quad i \in \{1, \dots, n \}\, ,
\end{equation}
 where ${p_{\text{in}, i}}$ and ${p_{e,i}}$ refer to the power input and electrical power output, respectively. If bus $i$ is a synchronous machine, then \eqref{eq:basic} describes the electromechanical swing dynamics for the generator rotor angle $\theta_{i}$ \cite{PWS-MAP:98,PK:94}, $m_{i}>0$ is the generator's rotational inertia, and  $d_{i}>0$ accounts for frequency damping or primary speed droop control (neglecting ramping limits). If bus~$i$ connects to a renewable or battery source interfaced with a power electronics inverter operated in grid-forming mode \cite{QCZ-TH:13,JS-DZ-RO-AS-TS-JR:15}, then $\theta_{i}$ is the voltage phase angle, $d_{i}>0$ is the droop control coefficient, and $m_{i}>0$ accounts for power measurement time constant \cite{JS-DG-JR-TS:13}, a control gain \cite{IAH-EMF:08}, or arises from virtual inertia emulation through a dedicated controlled device~\cite{SN-SD-MCC:13,HB-TI-YM:14,SD-JA:13}. Finally, the dynamics \eqref{eq:basic} may also arise from frequency-dependent or actively controlled frequency-responsive loads~\cite{PK:94}.
In general, each bus $i$ will host an ensemble of these devices, and the quantities $m_i$ and $d_i$ are lumped parametrizations of their aggregate behavior.

Under the assumptions of identical unit voltage magnitudes, purely inductive lines, and a small-signal approximation, the electrical power output at the terminals is given by 
\cite{PK:94}
\begin{equation}
\label{eq:elec}
{p_{e,i}}= \sum_{j=1}^{n} b_{ij} (\theta_i -\theta_j), \quad i \in \{1, \dots, n \}\, ,
\end{equation}
 where $b_{ij} \geq 0$ is the susceptance between nodes $\{i,j\} \in \mathcal E$.

 The state space representation of  the system (\ref{eq:basic})-(\ref{eq:elec}) is then
\begin{equation}
\label{eq:dynamics}
\left[
\setlength\arraycolsep{1.5pt}\begin{array}{cc}
\dot{\theta}\\
\dot{\omega}\\
\end{array}
\right]=
\left[
\setlength\arraycolsep{1.5pt}\begin{array}{cc}
{0} & I \\
-{M}^{-1}L & -{M}^{-1}{D}\\
\end{array}
\right]\left[
\setlength\arraycolsep{1.5pt}\begin{array}{cc}
\theta\\
\omega\\
\end{array}
\right] +
\left[
\setlength\arraycolsep{1.5pt}\begin{array}{cc}
0\\
{M}^{-1}\\
\end{array}
\right]
{{p_\text{in}}}\,,
\end{equation}
where $M={\textup{diag}}\{m_{i}\}$ and $D ={\textup{diag}}\{d_{i}\}$ are the diagonal matrices of inertia and damping/droop coefficients, and $L=L^{\mathsf{T}} \in \real^{n \times n}$ is the network Laplacian (or susceptance) matrix with off-diagonal elements $l_{ij} = -b_{ij}$ and diagonals $l_{ii} = \sum_{j=1,j\neq i}^{n} b_{ij}$. The states $(\theta,\omega) \in \real^{2n}$ are the stacked vectors of angles and frequencies and ${p_\text{in}} \in \real^{n}$ is the net power input -- all of which are deviation variables from nominal values. 

\subsection{Coherency performance metric}
We consider the linear power system model \eqref{eq:dynamics} driven by the inputs ${p_{\text{in}, i}}$ accounting either for faults or non-zero initial values (modeled as impulses) or for random fluctuations in renewables and loads.  We are interested in the energy expended in returning to the steady-state configuration, expressed as a quadratic cost of the angle differences and frequency displacements:
\begin{equation}
	 \mathlarger{\int_0^\infty} \mathlarger{ \bigg\{\sum_{i,j =1}^{n}} {a_{ij}} (\theta_{i}(t)-\theta_{j}(t))^2 + {\mathlarger \sum_{i=1}^{n}} {s_{i}}\, \omega_{i}^2(t) \, \bigg\}\, {\text{d}t}\,.
	\label{eq: energy}
\end{equation}
Here, $s_i$ are positive scalars and we assume that the nonnegative scalars {$a_{ij} = a_{ji} \geq 0$} induce a connected graph -- not necessarily identical to the power grid itself. We denote by $S$ the matrix $\diag\{ s_i\}$, and by $N$ the Laplacian matrix of the graph induced by the {$a_{ij}$}. In this compact notation, $N = L$ would be an example of local error penalization \cite{BB-MRJ-PM-SP:12,EL-SZ:12}, while $N =  I_{n} - \vectorones[n]\vectorones[n]^{\mathsf{T}}/n$ penalizes global errors.

Aside from consensus and synchronization studies \cite{EL-SZ:12,BB-MRJ-PM-SP:12,MF-FL-MRJ:14,MF-XZ-FL-MRJ:14,TS-IS-JL-FD:15,MS-NM:14}  the coherency metric \eqref{eq: energy} has recently also been also used in power system analysis and control \cite{ES-BB-DFG:15,FD-MJ-MC-FB:13a,XW-FD-MJ:15a}.

The above metric \eqref{eq: energy} represents a generalized energy in synchronous machines. Indeed, for {$a_{ij} = b_{ij}$} (where $b_{ij}$ are the power line susceptances) and ${s_{i}} = m_{i}$, the metric \eqref{eq: energy} accounts for the potential and kinetic energy in swing mode oscillations. Following the interpretation proposed in \cite{ES-BB-DFG:15}, for {$a_{ij} = g_{ij}$} (where $g_{ij}$ are the power line conductances), the metric \eqref{eq: energy} accounts for the transient resistive losses in the grid lines when linearized around the no-load profile.

Adopting the state representation introduced in \eqref{eq:dynamics}, the performance metric \eqref{eq: energy} can be rewritten as the time-integral $\int_0^\infty y(t)^{\mathsf{T}} y(t)\, \text{d}t$ of the performance output
\begin{equation}
\label{eq:OutputMat}
 y = 
 \underbrace{
 \left[
\setlength\arraycolsep{1.5pt}\begin{array}{cc}
N^{\frac{1}{2}} & 0 \\
0 & S^{\frac{1}{2}}\\
\end{array}\right]
}_{=C} \left[ \setlength\arraycolsep{1.5pt}\begin{array}{cc} 
\theta \\
\omega 
\end{array}\right] \, .
\end{equation}%

In order to model the localization of the disturbances in the grid,
we parametrize the input ${p_\text{in}}$ as 
\begin{equation*}
{p_\text{in}}={V}^{\frac{1}{2}} \eta, \quad {V}=\diag{\{v_i}\}, \quad {v_i} \ge 0,
\end{equation*}
where $V$ is assumed to be known from historical data among other sources.
We therefore obtain the state space model
\begin{equation}
\begin{bmatrix}
\dot{\theta}\\
\dot{\omega}\\
\end{bmatrix}
=
\underbrace{
\begin{bmatrix}
{0} & I \\
-{M}^{-1}L & -{M}^{-1}{D}\\
\end{bmatrix}
}_{=A}
\begin{bmatrix}
\theta\\
\omega\\
\end{bmatrix}
 +
\underbrace {
\begin{bmatrix}
0 \\
M^{-1} {V}^{1/2}
\end{bmatrix}
}_{=B}
\eta.
\label{eq: input}
\end{equation}
In the following, we refer to the input/output map \eqref{eq:OutputMat}, \eqref{eq: input} as $\mathscr{G} = (A,B,C)$.
If the inputs $\eta_i$ are Dirac impulses, then \eqref{eq: energy} measures the squared 
$\mathscr{H}_2$ norm $\|\mathscr{G}\|_{2}^2$ of the system \cite{KZ-JCD-KG:96}.

There are a number of interpretations of the $\mathscr{H}_2$ norm $\|\mathscr{G}\|_{2}^2$ of a power system \cite{ES-BB-DFG:15}. The relevant ones in our context are:
\begin{enumerate}[label=(\alph*), leftmargin=0.05cm, itemindent=0.5cm]
\setlength{\itemsep}{2pt}
\item The squared $\mathscr{H}_2$ norm of $\mathscr{G}$ measures the energy amplification, i.e., the sum of $\mathscr L_2$ norms of the outputs $y_{i}(t)$, for unit impulses at all inputs $\eta_{i}(t) \!=\! \delta(t)$:
\begin{equation*}
{\|\mathscr{G}\|}_{2}^2={\mathlarger{\sum_{i=1}^{n} \int_0^\infty}} y_{i}(t)^{\mathsf{T}} \, y_{i}(t)\, {\text{d}t}.
\end{equation*}
These impulses are of strength ${v^{1/2}_i>0}$ for each node $i \in \until n$ and can model faults or initial conditions.
\item The squared $\mathscr{H}_2$ norm of $\mathscr{G}$ quantifies the steady-state total variance of the output for a system subjected to {unit variance stochastic white noise inputs $\eta_{i}(t)$}:
\begin{equation*}
{\|\mathscr{G}\|}_{2}^2={{\lim_{t \to \infty}}} {\mathbb{E}} \left\{y(t)^{\mathsf{T}} \, y(t)\right\},
\end{equation*}
where $\mathbb{E}$ denotes the expectation operator.
The white noise inputs can model stochastic fluctuations of renewable generation or loads. The matrix ${V^{1/2}}=\textup{diag}\{{ v^{1/2}_i}\}$ quantifies the probability of occurrence of such fluctuations at each node $i$.

\end{enumerate}

In general, the $\mathscr H_{2}$ norm of a linear system can be calculated efficiently by solving a linear Lyapunov equation. In our case an additional linear constraint is needed to account for the marginally stable and undetectable mode $z_0 = [\vectorones[n]^\mathsf{T} \; \vectorzeros[n]^\mathsf{T}]^\mathsf{T}$ corresponding to an absolute angle reference for the grid.
 
\begin{lemma}{{\bf ($\mathscr{H}_2$ norm via observability Gramian)}}
\label{lemma: Observability Gramian}
For the state-space system $(A,B,C)$ defined above, we have that
\begin{equation}
\label{eq:ObservGram}
{\|\mathscr{G}\|}_{2}^2=\textup{Trace}(B^{\mathsf{T}}P B)\, ,
\end{equation}
where the observability Gramian $P \in \real^{2n \times 2n}$ is uniquely defined by the following Lyapunov equation and 
an additional constraint {via} $z_0 = [\vectorones[n]^\mathsf{T} \; \vectorzeros[n]^\mathsf{T}]^\mathsf{T}$:
\begin{align}
\label{eq: Lyap}
& {PA}+{A}^{\mathsf{T}}{P}+{C}^{\mathsf{T}}{C}=0 \,,\\
\label{eq: Lyap constraint}
& P z_0 = \vectorzeros[2n] \,.
\end{align}
\end{lemma}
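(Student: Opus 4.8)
The plan is to realize $P$ as the observability Gramian $P := \int_{0}^{\infty} e^{A^{\mathsf{T}} t}\, C^{\mathsf{T}} C\, e^{A t}\, dt$, to show this integral converges even though $A$ is not Hurwitz, to check that it satisfies \eqref{eq: Lyap} and \eqref{eq: Lyap constraint} and reproduces $\|G\|_{2}^{2}$, and to prove that these two conditions single it out uniquely. I would begin by recording the structural facts on which everything rests. Since $L$ and $N$ are Laplacians of connected graphs and $D \succ 0$: (i) $A v_{0} = \vectorzeros[2n]$ and $C v_{0} = \vectorzeros[2n]$, so $v_{0}$ spans a marginally stable \emph{unobservable} mode; (ii) the eigenvalue $0$ of $A$ is simple and semisimple (a Jordan chain would require $L a = -d$ for some $a$, i.e.\ $\vectorones[n]^{\mathsf{T}} d = 0$, impossible since all $d_{i} > 0$); and (iii) every other eigenvalue of $A$ lies in the open left half-plane (via the characteristic relation $\det(\lambda^{2} M + \lambda D + L) = 0$ and $D \succ 0$, a purely imaginary root $j\omega$ would force $\omega\, v^{*} D v = 0$ for the corresponding $v \neq 0$, hence $\omega = 0$). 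It follows that $e^{A t}$ is bounded and $e^{A t} \to \Pi := v_{0} w_{0}^{\mathsf{T}}/(w_{0}^{\mathsf{T}} v_{0})$ as $t \to \infty$, the spectral projector onto $\Span(v_{0})$, where $w_{0} = [\, d^{\mathsf{T}} \; m^{\mathsf{T}} \,]^{\mathsf{T}}$ is the left eigenvector of $A$ for the eigenvalue $0$ (so $w_{0}^{\mathsf{T}} v_{0} = \vectorones[n]^{\mathsf{T}} d > 0$), and crucially $C \Pi = (C v_{0}) w_{0}^{\mathsf{T}}/(w_{0}^{\mathsf{T}} v_{0}) = 0$.

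\emph{Existence.} Because $C\Pi = 0$, the only non-decaying mode of $e^{At}$ is annihilated by $C$, so $C e^{At} = C(e^{At} - \Pi)$ decays exponentially; hence the integrand $(C e^{At})^{\mathsf{T}}(C e^{At})$ is integrable and $P$ is well defined, symmetric, and positive semidefinite. That $P$ solves \eqref{eq: Lyap} is the textbook computation $A^{\mathsf{T}} P + P A = \int_{0}^{\infty} \frac{d}{dt}\big(e^{A^{\mathsf{T}} t} C^{\mathsf{T}} C\, e^{At}\big)\, dt = -C^{\mathsf{T}} C$, the boundary term at $t = \infty$ vanishing since $C e^{At} \to 0$ while $e^{A^{\mathsf{T}} t}$ stays bounded. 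The constraint \eqref{eq: Lyap constraint} is immediate: $e^{At} v_{0} = v_{0}$ gives $C e^{At} v_{0} = C v_{0} = 0$, hence $P v_{0} = \vectorzeros[2n]$. Finally, the realization $(A,B,C)$ has no feedthrough and the impulse response $h(t) = C e^{At} B$ is square-integrable, so by the definition of the squared $\mathcal{H}_{2}$ norm as the energy of the impulse response, $\|G\|_{2}^{2} = \int_{0}^{\infty} \trace\big(h(t)^{\mathsf{T}} h(t)\big)\, dt = \trace\Big(B^{\mathsf{T}}\big(\int_{0}^{\infty} e^{A^{\mathsf{T}} t} C^{\mathsf{T}} C\, e^{At}\, dt\big) B\Big) = \trace(B^{\mathsf{T}} P B)$, which is \eqref{eq:ObservGram}.

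\emph{Uniqueness} is the delicate point and the main obstacle: it genuinely fails for \eqref{eq: Lyap} alone, because $A$ is not Hurwitz the homogeneous equation $\Delta A + A^{\mathsf{T}} \Delta = 0$ has nonzero solutions, and \eqref{eq: Lyap constraint} is precisely what restores a single solution. I would argue: if $P_{1}$ and $P_{2}$ both satisfy \eqref{eq: Lyap} and \eqref{eq: Lyap constraint}, set $\Delta := P_{1} - P_{2}$; then $\Delta A + A^{\mathsf{T}} \Delta = 0$ and $\Delta v_{0} = \vectorzeros[2n]$, so $\frac{d}{dt}\big(e^{A^{\mathsf{T}} t} \Delta\, e^{At}\big) = e^{A^{\mathsf{T}} t}(A^{\mathsf{T}} \Delta + \Delta A) e^{At} = 0$, i.e.\ $e^{A^{\mathsf{T}} t} \Delta\, e^{At} = \Delta$ for all $t$; letting $t \to \infty$ and using $e^{At} \to \Pi$ yields $\Delta = \Pi^{\mathsf{T}} \Delta\, \Pi = w_{0}\, (v_{0}^{\mathsf{T}} \Delta v_{0})\, w_{0}^{\mathsf{T}}/(w_{0}^{\mathsf{T}} v_{0})^{2} = 0$, because $v_{0}^{\mathsf{T}} \Delta v_{0} = 0$ (no symmetry of $\Delta$ is needed), which closes the argument. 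It is also worth flagging \emph{why} the observability and not the controllability Gramian appears here: the marginal mode $v_{0}$ is controllable, $w_{0}^{\mathsf{T}} B = \vectorones[n]^{\mathsf{T}} W^{1/2} \neq 0$ whenever $W \neq 0$, so $\int_{0}^{\infty} e^{At} B B^{\mathsf{T}} e^{A^{\mathsf{T}} t}\, dt$ diverges; it is only $C v_{0} = 0$ that keeps the output square-integrable and the Gramian $P$ finite.
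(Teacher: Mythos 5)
Your proof is correct and follows the same overall architecture as the paper's: realize $P$ as the Gramian integral $\int_0^\infty e^{A^{\mathsf{T}}t}C^{\mathsf{T}}Ce^{At}\,dt$, use undetectability of $v_0$ to justify convergence, verify \eqref{eq: Lyap} by integrating a total derivative, check \eqref{eq: Lyap constraint} by inspection, and then argue uniqueness. The one place you genuinely diverge is the uniqueness step, and your version is arguably tighter. The paper asserts that all solutions of \eqref{eq: Lyap} are parametrized as $\hat P + \tau\xi\xi^{\mathsf{T}}$ with $\xi = [(D\vectorones[n])^{\mathsf{T}}\;(M\vectorones[n])^{\mathsf{T}}]^{\mathsf{T}}$ spanning $\Ker(A^{\mathsf{T}})$; that the homogeneous solution space of $\Delta A + A^{\mathsf{T}}\Delta = 0$ is exactly one-dimensional does not follow from $\Rank(A^{\mathsf{T}}) = 2n-1$ alone but needs the spectral facts you establish explicitly (zero is a simple, semisimple eigenvalue and the rest of the spectrum lies in the open left half-plane, via the quadratic pencil $\lambda^2 M + \lambda D + L$). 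Your alternative — showing $e^{A^{\mathsf{T}}t}\Delta e^{At} = \Delta$ for all $t$, passing to the limit $e^{At}\to\Pi = v_0w_0^{\mathsf{T}}/(w_0^{\mathsf{T}}v_0)$, and killing the result with $\Delta v_0 = \vectorzeros[2n]$ — sidesteps the parametrization claim entirely and makes the role of the constraint \eqref{eq: Lyap constraint} transparent. The closing observation that the controllability Gramian diverges (since $w_0^{\mathsf{T}}B \neq \vectorzeros[n]^{\mathsf{T}}$) is a correct and worthwhile aside explaining why the observability formulation is the right one here.
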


\begin{proof}
Following the derivation of the $\mathscr H_2$ norm for state-space systems \cite{KZ-JCD-KG:96},
we have ${\|\mathscr{G}\|}_{2}^2=\textup{Trace}(B^{\mathsf{T}}\hat P B)$, where
$\hat P$ is the observability Gramian 
$\hat P={{\int_0^\infty}}e^{{A^{\mathsf{T}}}t}C^{\mathsf{T}} C e^{{A}t}\, \text{d}t$. Note from \eqref{eq:OutputMat} that the 
mode $z_0 = [\vectorones[n]^\mathsf{T} \; \vectorzeros[n]^\mathsf{T}]^\mathsf{T}$ associated with the marginally stable 
eigenvalue of $A$ is not detectable, i.e., it holds that 
$C e^{{A}t} z_0 = C z_0 = \vectorzeros[2n]$ for all $t\geq0$.
Because the remaining eigenvalues of $A$ are stable, the integral is finite.

Next, we show that $\hat P$ is a solution for both \eqref{eq: Lyap} and \eqref{eq: Lyap constraint}. By taking 
the derivative of $e^{{A^{\mathsf{T}}}t}C^{\mathsf{T}} C e^{{A}t}$ with respect to $t$, and then integrating 
from $t=0$ to $t=+\infty$, we obtain
$$
A^\mathsf{T} \hat P + \hat P A = \left[
e^{A^\mathsf{T}t} C^\mathsf{T}  C e^{At}
\right]_0^{\infty}.
$$
Using the fact that $C z_0 = A z_0 = \vectorzeros[2n]$,
we conclude that $\left[ e^{A^\mathsf{T}t} C^\mathsf{T}  C e^{At} \right]_0^{\infty} = -C^\mathsf{T} C$ and 
therefore \eqref{eq: Lyap} holds for $\hat P$. The fact that $\hat P$ satisfies \eqref{eq: Lyap constraint} can be verified by inspection, as
\begin{equation*}
\hat P z_0
 ={{\int_0^\infty}}\!\!\!e^{{A^{\mathsf{T}}}t}C^{\mathsf{T}} C e^{{A}t} z_0\, {\text{d}t}
= {{\int_0^\infty}}\!\!\!e^{{A^{\mathsf{T}}}t}C^{\mathsf{T}} C z_0\, {\text{d}t} = \vectorzeros[2 n].
\end{equation*}

It remains to be shown that $\hat P$ is the unique solution of \eqref{eq: Lyap} and \eqref{eq: Lyap constraint}. As $\Rank{\left(A^{\mathsf{T}}\right)} = 2n-1$, the rank–nullity theorem implies that the kernel of 
$A^{\mathsf{T}}$ is given by a vector $\zeta \in \real^{2n}$.
It can be verified that $A^{\mathsf{T}} \zeta = \vectorzeros[2 n]$ holds for
$\zeta = [(D \vectorones[n])^{\mathsf{T}} \; (M \vectorones[n])^{\mathsf{T}}]^{\mathsf{T}}$. The solutions of  \eqref{eq: Lyap} can hence be parametrized by 
$$
P(\tau) = \hat P + \tau \zeta \zeta^{\mathsf{T}},
$$
for $\tau \in \real$. Finally, \eqref{eq: Lyap constraint} holds if
$ (\hat P + \tau \zeta \zeta^{\mathsf{T}}) z_0 =  \vectorzeros[2 n]$. In combination with 
$\hat P z_0 = \vectorzeros[2 n] $ this implies $\tau = 0$.
With this choice of $\tau$, $P$ equals the positive semidefinite matrix $\hat P$.
\end{proof}

\section{Optimal inertia allocation}
\label{Section: Optimal inertia allocation}
We assume that each node $i \in \{1, \dots, n\}$ has a nonzero\footnotemark\ 
inertia coefficient $\underline{m_i} > 0$
and we are interested in optimally allocating additional virtual inertia in order to minimize the $\mathscr{H}_2$ norm \eqref{eq: energy}, subject to upper bounds $\overline{m_i}$ at each bus (accounting for the available capacity or installation space) and a total budget constraint $m_\text{bdg}$ (accounting for the total cost of the  inertia-emulating devices).
\footnotetext{Observe that the case $m_i=0$ leads to an ill-posed model \eqref{eq:basic} whose number of algebraic and dynamic states depend on the system parameters.}
\noindent This problem statement is summarized as
\begin{subequations}
\label{eq:MinOp1}
\begin{align}
& \underset{P\,,\, m_{i}}{\text{\it minimize}}
& & {\|\mathscr{G}\|}_{2}^2= \trace \left({B}^{\mathsf{T}}{P B}\right) \label{eq:MinOp1a} \\
 & {\text{\it subject to}}
& &  \vectorones[n]^{\sf T}\, {m}\leq{{m}_{\textup{bdg}}} \label{eq:MinOp1b} \\
&& &   {m}_{i} \in [\underline{m_i},\,{\overline{m_i}}]  \label{eq:MinOp1c} \,,\quad i \in \until n \\
& & & {PA} +{A}^{\mathsf{T}} {P} + {C}^{\mathsf{T}}{C}=0, \extraconstraint \label{eq:MinOp1d} 
\,,
\end{align}
\end{subequations}
 where $(A,B,C)$ are the matrices of the input-output system  \eqref{eq:OutputMat}-\eqref{eq: input}. Observe the {bilinear nature of the Lyapunov constraint \eqref{eq:MinOp1d} featuring products of $A$ and $P$}, and recall from \eqref{eq: input} that the decision variables $m_i$ also appear as $m_i^{-1}$ in $A$. Hence, the problem \eqref{eq:MinOp1} is non-convex and typically also large-scale.

In the following, we will provide general lower and upper bounds, a simplified formulation under certain parametric assumptions, a detailed analysis of a two-area power system, and a numerical method to determine locally optimal solutions in the fully general case.

\subsection{ Performance bounds}
\label{subsec: uniform performance bounds}

\begin{theorem}{{\bf (Performance bounds)}}
\label{theorem:  Bounds on performance index}
Consider the power system model \eqref{eq:OutputMat}-\eqref{eq: input}, the squared $\mathscr H_{2}$ norm \eqref{eq:ObservGram}, and the optimal inertia allocation problem \eqref{eq:MinOp1}. 
Then the objective \eqref{eq:MinOp1a} satisfies
\begin{multline}
\label{eq:Bounds}
\frac{\underline  {v}}{2\overline{d}}
\left( \trace(N{L}^\dagger) + 
\sum_{i=1}^n \frac{{s_i}}{m_i} \right) \\
 \leq   {\|\mathscr{G}\|_{2}^{2}} \leq 
\frac{\overline{v}}{2\underline{d}} 
 \left(  \trace (N{L}^\dagger) +
\sum_{i=1}^n \frac{{s_i}}{m_i}\right),
\end{multline}
 where 
 $\underline{v} = \text{\it min}_i{\{v_i}\}$, $\overline{v} = \text{\it max}_i{\{v_i}\}$,
  $\underline{d} = \text{\it min}_i\{d_i\}$, and $\overline{d} = \text{\it max}_i\{d_i\}$.
  \end{theorem}
 
 \begin{proof}
Let us express the observability Gramian $P$ as the block matrix 
\begin{equation*}
{P}=
\begin{bmatrix}
X_1 & X_0 \\
X^{\mathsf{T}}_0 & X_2\\
\end{bmatrix}.
\end{equation*}
With this notation, the squared $\mathscr{H}_2$ norm \eqref{eq:ObservGram}, ${\|\mathscr{G}\|}_{2}^2$ reads as
\begin{equation}
\trace(B^{\mathsf{T}} P B) 
= \trace({V} M^{-2} X_2)
= \sum_{i=1}^{n} \frac{{v_i} X_{2,ii}}{m_i^2},
\label{eq:TraceNew}
\end{equation}
where we use the ring commutativity of the trace and the fact that ${V}^{1/2}$ and $M^{-1}$ are diagonal and therefore commute.
The constraint \eqref{eq:MinOp1d} can be expanded as
\begin{equation}
\label{eq:LyapunovEqn}
\left[\begin{matrix}
X_1 & X_0 \\
{X_0}^{\mathsf{T}} & X_2\\
\end{matrix}\right] A
+
A^{\mathsf{T}} \left[\begin{matrix}
X_1 & X_0 \\
{X_0}^{\mathsf{T}} & X_2 \\
\end{matrix}\right]
+ 
\left[\begin{matrix}
 N& 0 \\
0 &  S\\
\end{matrix}\right]
=0.
\end{equation}
By right-multiplying the equation (1,1) of \eqref{eq:LyapunovEqn} by the Moore-Penrose pseudo-inverse $L^{\dagger}$ of the Laplacian $L$, we obtain 
$$
-X_0 M^{-1} L L^\dagger - L M^{-1} X_0^{\mathsf{T}} L^\dagger = {-}N L^\dagger.
$$
By the constraint \eqref{eq: Lyap constraint} we have that $\left[\vectorones[n]^{\mathsf{T}} ~ \vectorzeros[n]^{\mathsf{T}}\right]P = \left[\vectorzeros[n]^{\mathsf{T}} ~ \vectorzeros[n]^{\mathsf{T}}\right]$ which implies $\vectorones[n]^{\mathsf{T}} X_0 = \vectorzeros[n]^{\mathsf{T}}$.
This fact together with the identity
${L}{L}^\dagger = (I_{n} - \vectorones[n]\vectorones[n]^{\mathsf{T}}/n)$,
implies that $L L^\dagger X_0 = X_0$.
Then, by using the ring commutativity of the trace, and its invariance with respect to transposition of the argument, we obtain
\begin{equation}
\label{eq:TraceSolPa}
2 \trace(M^{-1} X_0) = \trace({N}{L}^\dagger).
\end{equation}
On the other hand,  equation (2,2) of \eqref{eq:LyapunovEqn} implies that
$$
X_0^{\mathsf{T}} + X_0 = X_2 M^{-1}D + D M^{-1}X_2 - S.
$$
Similarly as before we left-multiply by $M^{-1}$, use trace properties and the commutativity of matrices $M^{-1}$, $D$ to obtain
\begin{equation}
\label{eq:TraceSolPb}
2\trace(M^{-1} X_0 - DM^{-2}X_2)= - \trace(M^{-1}S).
\end{equation}
Thus, \eqref{eq:TraceSolPa} and \eqref{eq:TraceSolPb} together deliver
\begin{equation}
\label{eq:TraceSim}
\trace(DM^{-2}X_2) = \frac12 {\trace}(M^{-1}S + N{{L}^\dagger}).
\end{equation}
From (\ref{eq:TraceNew}) we obtain the relations
\begin{equation*}
\underline{v}\sum_{i=1}^{n} \frac{X_{2,ii}}{m_i^2}\leq {\|\mathscr{G}\|}_{2}^2 \leq {\overline{v}} \sum_{i=1}^{n} \frac{X_{2,ii}}{m_i^2}
\,,
\end{equation*}
which can be further bounded as
\begin{equation}
\label{eq:TraceNew2}
\dfrac{\underline{v}}{{\overline{d}}} \sum_{i=1}^{n} \frac{d_i X_{2, ii}}{m_i^2}\leq {\|\mathscr{G}\|}_{2}^2 \leq \dfrac{\overline{v}}{\underline{d}} \sum_{i=1}^{n} \frac{d_i X_{2, ii}}{m_i^2}
\,.
\end{equation}
The structural similarity of \eqref{eq:TraceSim} and \eqref{eq:TraceNew2} allows us to state upper and lower bounds by  rewriting \eqref{eq:TraceNew2} as in \eqref{eq:Bounds}.
\end{proof}

Notice that in the bounds proposed in Theorem~\ref{theorem:  Bounds on performance index}, the network topology described by the Laplacian $L$ enters only as a constant factor, and is decoupled from the decision variables $m_i$. Moreover, in the case $N= L$ (short-range error penalty on angles differences), this offset term becomes just a function of the grid size: $\trace (N L^\dagger) = \trace (L L^\dagger) = n-1$.

Theorem~\ref{theorem:  Bounds on performance index} (and its proof) sheds some light on the nature of the optimization problem that we are considering, and in particular on the role played by the mutual relation between disturbance strengths ${v_i}$, damping coefficients $d_i$, their ratios ${v_i}/d_{i}$, frequency penalty weights $s_i$, and the decision variables $m_i$. These insights are further developed hereafter.

\subsection{Noteworthy cases}
\label{subsec: analytic closed-form results}

In this section, we consider some special choices of the performance metric and some assumptions on the system parameters, which are practically relevant and yield simplified versions of the general optimization problem \eqref{eq:MinOp1}, enabling in most cases the derivation of closed-form solutions.

We first consider the performance index \eqref{eq: energy} corresponding to the effort of primary control. As a remedy to mitigate low-inertia frequency stability issues, additional fast-ramping primary control is often put forward \cite{AU-TB-GA:14}.
The primary control effort can be accounted for by the integral quadratic cost
 \begin{equation}
 	\int_{0}^{\infty} \dot\theta(t)^{\mathsf{T}} D \dot\theta(t) \, {\text{d}t} \,.
	\label{eq: control cost}
 \end{equation}
 Hence, the effort of primary control \eqref{eq: control cost} mimics 
 the $\mathscr H_{2}$ performance where the performance matrices in \eqref{eq:OutputMat} are chosen as $N=0$ and $S=D$. This intuitive cost functions allows an insightful simplification of the optimization problem \eqref{eq:MinOp1}. 
\begin{theorem}{\bf(Primary control effort minimization)} 
\label{theorem: Performance index for primary control cost}
Consider the power system model \eqref{eq:OutputMat}-\eqref{eq: input}, the squared $\mathscr H_{2}$ norm \eqref{eq:ObservGram}, and the optimal inertia allocation problem \eqref{eq:MinOp1}. For a performance output characterizing the effort of primary control \eqref{eq: control cost}:  $S= D$ and $N=0$, the optimization problem \eqref{eq:MinOp1} can be equivalently restated as the convex problem%
\begin{subequations}%
\label{eq:primarycontrol}%
\begin{align}
& \underset{{m}_{i}}{\text{minimize}} \quad \quad
  \sum\limits_{i=1}^{n}  \frac{v_i}{m_i}\label{eq:primarycontrol_cost} \\
 & \text{subject to} \quad \quad
\eqref{eq:MinOp1b}-\eqref{eq:MinOp1c}\,,
\end{align}%
\end{subequations}%
where, we recall, ${v_i}$ describes the strength of the disturbance at node $i$.
\end{theorem}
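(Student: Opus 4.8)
The plan is to exploit the special structure $N=0$, $S=D$ to solve the Lyapunov equation of Lemma~\ref{lemma: Observability Gramian} in closed form, and thereby turn the objective \eqref{eq:MinOp1a} into an explicit convex function of the decision variables $m_i$. Concretely, for $C^{\mathsf{T}}C=\begin{bmatrix}0 & 0\\ 0 & D\end{bmatrix}$ I claim that the (unique) observability Gramian is the block-diagonal matrix
\[
P=\frac12\begin{bmatrix} L & 0\\ 0 & M \end{bmatrix}\succeq 0.
\]

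To establish this I would simply verify that this $P$ satisfies \eqref{eq: Lyap} and \eqref{eq: Lyap constraint} and then invoke the uniqueness asserted in Lemma~\ref{lemma: Observability Gramian}. Using $A$ from \eqref{eq: input} and $MM^{-1}=I$, one block multiplication gives $PA=\tfrac12\begin{bmatrix}0 & L\\ -L & -D\end{bmatrix}$, whence $PA+A^{\mathsf{T}}P=\begin{bmatrix}0 & 0\\ 0 & -D\end{bmatrix}$, which exactly cancels $C^{\mathsf{T}}C$; thus \eqref{eq: Lyap} holds. For \eqref{eq: Lyap constraint}, $Pv_0$ has top block $\tfrac12 L\vectorones[n]=\vectorzeros[n]$ and bottom block $\vectorzeros[n]$, so $Pv_0=\vectorzeros[2n]$. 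Finally $P\succeq0$ because $L\succeq0$ and $M\succ0$, so $P$ is indeed the observability Gramian.

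Substituting into \eqref{eq:ObservGram} with $B$ as in \eqref{eq: input}: since $M$ and $W$ are diagonal, the product $PB$ has vanishing top block and bottom block $\tfrac12 W^{1/2}$, and $W^{1/2}M^{-1}W^{1/2}=M^{-1}W$, so
\[
\|G\|_2^2=\trace(B^{\mathsf{T}}PB)=\tfrac12\,\trace(M^{-1}W)=\tfrac12\sum_{i=1}^{n}\frac{w_i}{m_i}.
\]
Hence, under $N=0$, $S=D$ the Lyapunov constraint \eqref{eq:MinOp1d} no longer couples $P$ to the cost in any nontrivial way --- for each feasible $m$ it merely determines $P$ --- and can be eliminated, while the objective \eqref{eq:MinOp1a} collapses to $\tfrac12\sum_i w_i/m_i$. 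Scaling a cost by the positive constant $\tfrac12$ does not change the set of minimizers, and the surviving constraints are exactly \eqref{eq:MinOp1b}--\eqref{eq:MinOp1c}, so \eqref{eq:MinOp1} is equivalent to \eqref{eq:primarycontrol}. Convexity is then immediate: each $m_i\mapsto w_i/m_i$ is convex on $m_i>0$ (as $w_i\ge0$), and \eqref{eq:MinOp1b}--\eqref{eq:MinOp1c} are affine.

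The only step that is not routine bookkeeping is recognizing the closed form of $P$. It can be anticipated from a physical energy balance: for the impulse response entering through channel $i$, the cost \eqref{eq: control cost} is the energy dissipated by the damping terms, which by conservation of the mechanical energy $\tfrac12\omega^{\mathsf{T}}M\omega+\tfrac12\theta^{\mathsf{T}}L\theta$ (decaying at rate $\dot\theta^{\mathsf{T}}D\dot\theta$ along the autonomous dynamics) equals its value at $t=0^{+}$, namely $\tfrac12\,\omega(0^{+})^{\mathsf{T}}M\,\omega(0^{+})=\tfrac12\,w_i/m_i$ --- the potential term being zero because the impulse acts only on $\dot\omega$ --- and summing over $i$ reproduces $\|G\|_2^2=\tfrac12\sum_i w_i/m_i$. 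All remaining details are the block computation above together with the uniqueness already granted by Lemma~\ref{lemma: Observability Gramian}.
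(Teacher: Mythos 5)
Your proof is correct and follows essentially the same route as the paper: both identify the closed-form Gramian $P=\tfrac12\,\mathrm{diag}(L,M)$ for the choice $N=0$, $S=D$, verify it against the Lyapunov equation and the constraint $Pv_0=\vectorzeros[2n]$, and then evaluate $\trace(B^{\mathsf{T}}PB)=\tfrac12\sum_i w_i/m_i$. The extra energy-balance heuristic you give for anticipating $P$ is a nice aside but does not change the argument.
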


\begin{proof} 
With $N=0$ and $S = D$, the Lyapunov equation \eqref{eq:LyapunovEqn} together with the constraint \eqref{eq: Lyap constraint} is solved explicitly by
\begin{equation*}
\label{eq:GeneralP}
P=
\begin{bmatrix}
X_1 & X_0 \\
X^{\mathsf{T}}_0 & X_2\\
\end{bmatrix}
=
\frac{1}{2}
\begin{bmatrix}
L & 0 \\
0 & M\\
\end{bmatrix}.
\end{equation*} 
The performance metric as derived in \eqref{eq:TraceNew} therefore becomes
\begin{equation*}
	{\|\mathscr G\|}_{2}^2
	= \sum_{i=1}^{n} \frac{{v_i} X_{2,ii}}{m_i^2}
	= \frac{1}{2} \sum_{i=1}^{n} \frac{v_i}{m_i}.
\end{equation*}
This concludes the proof.
\end{proof}

The equivalent convex formulation \eqref{eq:primarycontrol} yields the following important insights.
First and foremost, the optimal solution to \eqref{eq:primarycontrol} is unique (as long as at least one ${v_i}$ is greater than zero) and also independent of the network topology and the line susceptances. 
It depends solely on the location and strength of the disturbance as encoded in the coefficients ${v_i}$. 
For example, if the disturbance is concentrated at a particular node $i$, that is, ${v_i}\neq 0$ and ${v_j}=0 \, \forall \, j\neq i$, then the optimal solution is to allocate the maximal inertia at node $i$: $m_i=\text{\it min}\{m_\text{bdg}, \overline{m_i}\}$. {If the capacity constraint \eqref{eq:MinOp1c} is relaxed}, the optimal inertia allocation is proportional to the disturbance~${{v_i}^{1/2}}$.

We now consider a different assumption that also allows to derive a similar simplified analysis in other notable cases.

\begin{assumption}{{\bf (Uniform disturbance-damping ratio)}} 
\label{ass: Uniform disturbance-damping ratio}
The ratio $\lambda =\, {v}_{i}/{d}_{i}$ is constant for all $i \in \until n$.
\oprocend
\end{assumption}

Notice that the droop coefficients $d_{i}$ are often scheduled proportionally to the rating of a power source to guarantee fair power sharing \cite{FD-JWSP-FB:14a}.
Meanwhile, it is reasonable to expect that the disturbances due to variable renewable  fluctuations scale proportionally to the size of the renewable power source.
Hence, Assumption \ref{ass: Uniform disturbance-damping ratio} can be justified in many practical cases, including of course the case where both damping coefficients and disturbances are uniform across the grid. Aside from that, Assumption \ref{ass: Uniform disturbance-damping ratio} may be of general interest since it is common in many studies with a {\em spatially invariant} setting \cite{BB-MRJ-PM-SP:12,MS-NM:14,ES-BB-DFG:15}. Under this assumption, we have the following result.
 
\begin{theorem}{\bf (Optimal allocation with uniform disturbance-damping ratio)}
\label{theorem:  Properties of general performance index}
Consider the power system model \eqref{eq:OutputMat}-\eqref{eq: input}, the squared $\mathscr H_{2}$ norm \eqref{eq:ObservGram}, and the inertia allocation problem \eqref{eq:MinOp1}. 
Let Assumption~\ref{ass: Uniform disturbance-damping ratio} hold.
Then the optimization problem \eqref{eq:MinOp1} can be equivalently restated as the convex problem%
\begin{subequations}%
\label{eq:uniformratio}%
\begin{align}
& \underset{{m}_{i}}{\text{minimize}} \quad \quad
  \sum\limits_{i=1}^{n}  \frac{{s_i}}{m_i}\label{eq:uniformratio_cost} \\
 & \text{subject to} \quad \quad
 \eqref{eq:MinOp1b}-\eqref{eq:MinOp1c},
\end{align}%
\end{subequations}%
where we recall that ${s_i}$ is the penalty coefficient for the frequency deviation at node $i$.
\end{theorem}

\begin{proof}
From Assumption \ref{ass: Uniform disturbance-damping ratio}, let $\lambda = {v_i}/d_{i} > 0$ be constant for all $i \in \until n$. Then we can rewrite \eqref{eq:TraceNew} as 
\begin{equation*}
	{\|\mathscr{G}\|}_{2}^2
	= \sum_{i=1}^{n} \frac{v_i X_{2,ii}}{m_i^2}
	= \lambda \sum_{i=1}^{n} \frac{d_i X_{2,ii}}{m_i^2}.
\end{equation*}
This is equal, up to the scaling factor $\lambda$, to the left hand side of \eqref{eq:TraceSim}. We therefore have
\begin{equation}
{\|\mathscr{G}\|}_{2}^2 = \frac{\lambda}{2} \trace(M^{-1}S + N L^\dagger),
\label{eq:norm with assumption 1}
\end{equation}
which is equivalent, up to multiplicative factors and constant offsets, to the cost of the optimization problem \eqref{eq:uniformratio_cost}.
\end{proof}

Again, as in Theorem \ref{theorem: Performance index for primary control cost}, Theorem \ref{theorem:  Properties of general performance index} reduces the original optimization problem to a simple convex problem for which the optimal inertia allocation is {\em independent} of the network topology.
 Indeed, the physical intuition of Assumption~\ref{ass: Uniform disturbance-damping ratio} is that the disturbance is dissipated at every node in the same proportion, and thus network effects are negligible. 

This setting also allows us to highlight that the cost function for inertia allocation  needs to be chosen insightfully. For example, consider a frequency penalty $S$  proportional to the inertia coefficients, $S = cM$ for some $c \ge 0$ (including $c=0$):
 \begin{equation*}
	\mathlarger{\int_0^\infty} \bigg\{\mathlarger{ \sum_{i,j =1}^{n}} {a_{ij}} (\theta_{i}(t)-\theta_{j}(t))^2 + c \,{\mathlarger \sum_{i=1}^{n}} m_{i}\, \omega_{i}^2(t) \, \bigg\}\, {\text{d}t}\,.
\end{equation*}
This choice penalizes the variation in kinetic energy as it decays to zero -- a standard penalty in power systems.
The subsequent corollary shows that this cost function is independent of (and thus not meaningful for) the inertia allocation. The physical rationale is that kinetic energy is dissipated anyways.

\begin{corollary}{\bf (Kinetic energy penalization with uniform disturbance-damping ratio)}
Let Assumption~\ref{ass: Uniform disturbance-damping ratio} hold, and let the penalty on the frequency deviations be proportional to the allocated inertia, that is, $S = cM$.
Then the performance metric $\|\mathscr{G}\|_{2}^{2}$ is independent of the inertia allocation, and assumes the form
$$
\|\mathscr{G}\|_{2}^{2}
 = \frac{\lambda}{2} \left( c \,n + \trace(NL^\dagger)\right),
$$
where
$\lambda = {v_i}/d_{i} > 0$, for all $i \in \until n$, is the uniform disturbance-damping ratio.
\end{corollary}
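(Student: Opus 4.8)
The plan is to derive this as a one-step specialization of Theorem~\ref{theorem:  Properties of general performance index}. First I would recall the key identity \eqref{eq:norm with assumption 1} established in its proof: under Assumption~\ref{ass: Uniform disturbance-damping ratio}, writing $\lambda = {\tr w_{i}}/d_{i} > 0$ for the common disturbance-damping ratio, the squared norm is
\begin{equation*}
\|G\|_{2}^{2} = \frac{\lambda}{2}\,\trace\!\left(M^{-1}S + N L^{\dagger}\right).
\end{equation*}
All that remains is to insert the prescribed frequency penalty $S = c\cdot M$.

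Next I would simplify the first trace term. Since $M = \diag\{m_{i}\}$ is invertible ($m_{i} \ge \underline{m_{i}} > 0$) and $M^{-1}$ commutes with the diagonal matrix $cM$, we have $M^{-1}S = M^{-1}(cM) = c\,I_{n}$, so $\trace(M^{-1}S) = c\cdot n$. Substituting back yields
\begin{equation*}
\|G\|_{2}^{2} = \frac{\lambda}{2}\left(c\cdot n + \trace(N L^{\dagger})\right),
\end{equation*}
which is precisely the claimed expression. Since $M$ has disappeared from the right-hand side and $\trace(N L^{\dagger})$ depends only on the fixed penalty topology (not on the decision variables $m_{i}$), the metric is manifestly independent of the inertia allocation; the case $c=0$ is included and leaves only the angle-difference contribution $\trace(N L^{\dagger})$.

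I do not expect any real obstacle: the argument is a direct computation once \eqref{eq:norm with assumption 1} is in hand. The only points requiring minimal care are the invertibility of $M$ (needed to make sense of $M^{-1}S$) and the observation that $\trace(N L^{\dagger})$ is a constant offset unaffected by $m_{i}$, both of which are immediate from the standing assumptions.
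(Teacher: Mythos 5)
Your proposal is correct and matches the paper's own argument exactly: the corollary is obtained by substituting $S = c\cdot M$ into the identity \eqref{eq:norm with assumption 1} from the proof of Theorem~\ref{theorem:  Properties of general performance index}, so that $\trace(M^{-1}S) = c\cdot n$ and the dependence on $m$ disappears. No gaps.
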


\subsection{Explicit results for a two-area network}
\label{subsec: two-area}

In this subsection, we focus on a two-area power grid as in \cite{AU-TB-GA:14} to obtain some insight on the nature of this optimization problem. We also highlight the prominent role of the ratios ${v_i}/d_{i}$ as in Assumption~\ref{ass: Uniform disturbance-damping ratio} and the bounds \eqref{eq:Bounds}.

 In the case of a two-area system, it is possible to derive an analytical solution $P(m)$ of the Lyapunov equation \eqref{eq:MinOp1d}, as a closed-form function of the  vector of inertia allocations $m_i$. We thus obtain an explicit expression for the cost \eqref{eq:MinOp1a} as
  \begin{equation}
\|\mathscr{G}\|_2^2 = f(m) := \trace\left(B({{m}})^{\mathsf{T}} {P(m)} B({{m}}) \right),
\label{eq:f}
\end{equation}
 where {in the two-area case} $f(m)$ {reduces to a rational function of polynomials of orders 4 in the numerator and the denominator, in terms of inertia coefficients $m_{i}$.}
As the explicit expression is more convoluted than insightful, we will not show it here but only report the following observations:
\begin{enumerate}[label=(\alph*), leftmargin=0.05cm, itemindent=0.5cm]
\setlength{\itemsep}{2pt}
\item The problem \eqref{eq:MinOp1} admits a unique minimizer.
	
\item For sufficiently large bounds $\overline{m_i}$, the budget constraint \eqref{eq:MinOp1b} becomes active, that is, the optimizers satisfy $m_{1}^{*}+ m_{2}^{*}={m}_{\text{bdg}}$.
	In this case,  $m_{2} = m_{\textup{bdg}} - m_{1}$ can be eliminated, and  \eqref{eq:MinOp1} can be reduced to a scalar problem.

\item Identical $v_{i}/d_{i}$ ratios and frequency penalties $s_i$ result in identical optimal allocations $m_{1}^{*} = m_{2}^{*}$ (as predicted by Theorem \ref{theorem:  Properties of general performance index}), if capacity constraints are absent. If ${v_i}/d_i > {v_j}/d_j$, then $m_i^* > m_j^*$ (see the example in Figure~\ref{fig:2nodetrace}, where we eliminated $m_{2}^{*}={m}_{\text{bdg}}-m_{1}^{*}$).

\item For sufficiently uniform $v_i/d_{i}$ ratios, the problem \eqref{eq:MinOp1} is strongly convex. We observe that the cost function $f(m)$ is fairly flat over the feasible set (see Figure~\ref{fig:2nodetrace}).
		
\item For strongly dissimilar  $v_i/d_{i}$ ratios, we observe a less flat cost function. If disturbance affects only one node, for example, $v_1=1$ and $v_2=0$, then strong convexity is lost. 
\end{enumerate}

\begin{figure}[tb]
\centering
%
%
\definecolor{mycolor2}{HTML}{253F5B}%
\definecolor{mycolor1}{HTML}{D59B2D}%
\begin{tikzpicture}

\begin{axis}[%
width=2.9in,
height=1.6in,
at={(2.17in,1.077in)},
scale only axis,
xmin=0,
xmax=10,
xlabel style={font={\small\color{black}}},
xlabel={\small$m_1$},
ymin=0,
ymax=7,
ylabel style={font={\small\color{black}}},
ylabel={\small${f(m_1)}$},
axis background/.style={fill=white},
xmajorgrids,
ymajorgrids,
xtick={0, 2,4,6,8,10},
ytick={1,2,3,4,5,6},
yticklabel style = {font=\footnotesize,xshift=0ex},
xticklabel style = {font=\footnotesize,yshift=0ex},
legend entries={${v_1}/{d_1}= {v_2}/{d_2}$, ${v_1}/{d_1}\neq{v_2}/{d_2}$},
legend style={legend cell align=left, align=left, draw=none, font=\small},
legend style={at={(0.5,0.9)},anchor=north}
]
\addlegendimage{no markers, mycolor1, very thick}
\addlegendimage{no markers,mycolor2, very thick}

\addplot [color=mycolor1, line width=3pt]
  table[row sep=crcr]{%
0.380000000000001	7.00411697122224\\
0.4	6.75781249999999\\
0.42	6.53500596480764\\
0.44	6.3324933434766\\
0.460000000000001	6.14762783702488\\
0.48	5.97820378151261\\
0.5	5.82236842105263\\
0.52	5.67855404089582\\
0.540000000000001	5.54542518205308\\
0.56	5.42183716707022\\
0.58	5.30680320667692\\
0.609999999999999	5.14845100298539\\
0.640000000000001	5.00500801282051\\
0.67	4.87447209291165\\
0.699999999999999	4.75518433179724\\
0.73	4.64575852285322\\
0.76	4.54502734107997\\
0.790000000000001	4.45200078340824\\
0.82	4.36583373186673\\
0.85	4.28580038572806\\
0.880000000000001	4.21127392344497\\
0.91	4.14171018750227\\
0.94	4.07663449344793\\
0.970000000000001	4.01563088673493\\
1	3.95833333333333\\
1.03	3.904418449849\\
1.07	3.83730285397327\\
1.11	3.77510032529718\\
1.15	3.71729918938836\\
1.19	3.66345658581253\\
1.23	3.61318727925022\\
1.27	3.56615458505831\\
1.31	3.52206295733448\\
1.35	3.48065189466924\\
1.39	3.44169089815256\\
1.44	3.3961253894081\\
1.49	3.35371828642181\\
1.54	3.31416367320623\\
1.59	3.27719415341126\\
1.64	3.24257497957755\\
1.69	3.21009922457437\\
1.75	3.1737012987013\\
1.81	3.13984933114767\\
1.87	3.10830110306451\\
1.93	3.07884459810852\\
2	3.046875\\
2.07	3.01724098543414\\
2.14	2.98971784261967\\
2.22	2.96059716528868\\
2.3	2.93372388481084\\
2.38	2.9088781181764\\
2.47	2.88311329580463\\
2.56	2.85943800403226\\
2.66	2.83533680932577\\
2.76	2.81332572663944\\
2.87	2.79128345656328\\
2.98	2.7712886479665\\
3.1	2.7515778401122\\
3.22	2.73384680921233\\
3.35	2.71665637975536\\
3.49	2.70026771684734\\
3.63	2.68587743425406\\
3.78	2.67247869136937\\
3.94	2.66029426546716\\
4.1	2.65011368334022\\
4.27	2.64133520114767\\
4.44	2.63452751312464\\
4.62	2.62935716699657\\
4.8	2.62620192307692\\
4.98	2.625012000192\\
5.16	2.62576878723814\\
5.34	2.62848411052708\\
5.52	2.63320069875776\\
5.7	2.63999388004896\\
5.87	2.64841594103065\\
6.03	2.65823746506761\\
6.19	2.67003390024551\\
6.34	2.68303640688835\\
6.49	2.69809404343303\\
6.63	2.71418525182271\\
6.76	2.73107056760903\\
6.89	2.75002741752575\\
7.01	2.76956533666668\\
7.12	2.78938436329588\\
7.23	2.81123140644427\\
7.33	2.83304528105216\\
7.43	2.85692730072113\\
7.52	2.88038349347975\\
7.61	2.90590516222324\\
7.69	2.9305114586324\\
7.77	2.95711991620063\\
7.85	2.98594652644053\\
7.92	3.01318473193473\\
7.99	3.0425041563148\\
8.06	3.07412511831368\\
8.12	3.10325175558118\\
8.18	3.13443738413176\\
8.24	3.16788945278023\\
8.3	3.20384479092842\\
8.35	3.23591453456723\\
8.4	3.27008928571428\\
8.45	3.3065709104791\\
8.5	3.34558823529412\\
8.55	3.3874016938899\\
8.6	3.43230897009967\\
8.64	3.47069035947712\\
8.68	3.51146837033934\\
8.72	3.55486668577982\\
8.76	3.60113787008396\\
8.8	3.65056818181819\\
8.84	3.70348338274302\\
8.88	3.7602557915058\\
8.92	3.82131290483309\\
8.96	3.88714800824176\\
8.99	3.94000071586692\\
9.02	3.99613670301823\\
9.05	4.05586653096831\\
9.08	4.11954127561769\\
9.11	4.18755935569012\\
9.14	4.26037479008702\\
9.17	4.33850724599598\\
9.2	4.42255434782609\\
9.23	4.51320686355897\\
9.26	4.61126758507967\\
9.29	4.71767499507269\\
9.32	4.83353319868719\\
9.35	4.96015014397367\\
9.38	5.099086938579\\
9.4	5.19946808510639\\
9.42	5.30680320667692\\
9.44	5.42183716707022\\
9.46	5.54542518205309\\
9.48	5.67855404089582\\
9.5	5.82236842105263\\
9.52	5.9782037815126\\
9.54	6.14762783702488\\
9.56	6.33249334347662\\
9.58	6.53500596480764\\
9.6	6.75781249999999\\
9.62	7.00411697122222\\
};

\addplot [color=mycolor2, line width=3pt]
  table[row sep=crcr]{%
0.0500000000000007	7.18500517259066\\
0.0600000000000005	6.14384720309256\\
0.0700000000000003	5.400309354286\\
0.0800000000000001	4.84278448631288\\
0.0899999999999999	4.40926863392843\\
0.0999999999999996	4.06255941919967\\
0.109999999999999	3.77898260273188\\
0.119999999999999	3.54275535928207\\
0.130000000000001	3.34295111847372\\
0.140000000000001	3.17176518806257\\
0.15	3.02347412789595\\
0.16	2.89378535863744\\
0.17	2.77941632035345\\
0.18	2.67781391157092\\
0.19	2.58696252652736\\
0.210000000000001	2.4313700321601\\
0.23	2.30302403245107\\
0.25	2.19538646834306\\
0.27	2.10385649112644\\
0.290000000000001	2.02510245893178\\
0.31	1.9566525152259\\
0.33	1.89663404785533\\
0.35	1.84360247641814\\
0.369999999999999	1.79642556805625\\
0.390000000000001	1.754203348389\\
0.41	1.71621145314946\\
0.43	1.68186028866688\\
0.449999999999999	1.65066508288753\\
0.470000000000001	1.62222358294381\\
0.49	1.59619921454348\\
0.52	1.5610859671482\\
0.550000000000001	1.52999756231772\\
0.58	1.50231125734521\\
0.609999999999999	1.4775268359727\\
0.640000000000001	1.45523789149825\\
0.67	1.43511082487261\\
0.710000000000001	1.41116667212819\\
0.75	1.39004606109869\\
0.790000000000001	1.37132347784967\\
0.83	1.35465548508089\\
0.880000000000001	1.33628770797218\\
0.93	1.32025409858322\\
0.98	1.30620307732259\\
1.04	1.29156395629873\\
1.1	1.27898294100601\\
1.17	1.26649561646476\\
1.24	1.25599970147392\\
1.32	1.24605315835163\\
1.41	1.23705334281046\\
1.5	1.22999549470138\\
1.6	1.22406319693966\\
1.71	1.21949111891104\\
1.83	1.2164663778104\\
1.96	1.2151391571349\\
2.1	1.21563377251545\\
2.26	1.21828399145554\\
2.43	1.22320016325769\\
2.61	1.23046695263057\\
2.8	1.24018809715925\\
3	1.2524931438544\\
3.2	1.2667795330139\\
3.41	1.28381337492631\\
3.62	1.30287601510415\\
3.83	1.32395266834954\\
4.04	1.34706175216776\\
4.25	1.37225016634138\\
4.45	1.39823842049621\\
4.65	1.42626150398965\\
4.84	1.45486176223234\\
5.03	1.48549664165738\\
5.21	1.51650742319888\\
5.39	1.54957578317782\\
5.56	1.58282126063666\\
5.73	1.61815971999861\\
5.89	1.65346826201825\\
6.04	1.68850658067919\\
6.19	1.72555934079297\\
6.33	1.76209810403835\\
6.47	1.8006759452728\\
6.6	1.83847190715499\\
6.73	1.87832897807617\\
6.85	1.91710630753084\\
6.97	1.95796056168432\\
7.08	1.99739928540853\\
7.19	2.03891847390822\\
7.29	2.0786378428597\\
7.39	2.12041975279335\\
7.49	2.16446988738933\\
7.58	2.20625270078717\\
7.67	2.25027286931924\\
7.75	2.29147311255279\\
7.83	2.33482580659438\\
7.91	2.38055795924491\\
7.98	2.42273244024379\\
8.05	2.46713612876183\\
8.12	2.51400629417571\\
8.18	2.55635131017778\\
8.24	2.60090965098474\\
8.3	2.64791010194033\\
8.36	2.69761588924293\\
8.42	2.75033128243354\\
8.47	2.79681309582029\\
8.52	2.84586486930515\\
8.57	2.89775573270917\\
8.62	2.95279453118502\\
8.67	3.01133731901442\\
8.71	3.06096945308426\\
8.75	3.11334874665369\\
8.79	3.1687494618137\\
8.83	3.22748376776819\\
8.87	3.28990845927248\\
8.91	3.35643315389167\\
8.95	3.42753036238495\\
8.99	3.5037479514508\\
9.02	3.56465171102173\\
9.05	3.62910192158224\\
9.08	3.69744827581883\\
9.11	3.77008780545431\\
9.14	3.84747313957795\\
9.17	3.93012255375494\\
9.2	4.0186322799871\\
9.23	4.11369169411322\\
9.26	4.21610219721179\\
9.29	4.32680088359061\\
9.32	4.44689047356611\\
9.35	4.57767753503344\\
9.38	4.7207218013131\\
9.41	4.87790053478833\\
9.43	4.99164492306172\\
9.45	5.1134712880764\\
9.47	5.24429729278614\\
9.49	5.38518461381863\\
9.51	5.5373683317187\\
9.53	5.70229382504298\\
9.55	5.88166350283793\\
9.57	6.07749657869493\\
9.59	6.29220633960414\\
9.61	6.52870118979046\\
9.63	6.79051846547083\\
9.65	7.08200412890097\\
};

\end{axis}
\end{tikzpicture}%
\caption{Cost function profiles for identical and weakly dissimilar $v_i/d_i$ ratios for the two-area case.}
\label{fig:2nodetrace}
\smallskip
{$d_1=1 < d_2=2$, ${m}_{\text{bdg}}=25$, $a_{12}=1$}\\
\smallskip
     \input{budget.tex}
\caption{Optimal inertia allocation for a {two-area} system with non-identical damping coefficients $d_{i}$, and disturbances inputs varying from $(v_1,v_2) = (0,1)$ to $(v_1,v_2) = (1,0)$. We choose $d_1=1<d_2=2$, $m_{\text{bdg}}=25$, and $a_{12}=1$ as the system parameters.}
\label{fig_simb}
\end{figure}
From the above facts, we conclude that the input scaling factors $v_i$ play a fundamental role in the determination of the optimal inertia allocation.
To obtain a more complete picture, we linearly vary the disturbance input matrices from $({v_1,v_2}) = (0,1)$ to $({v_1,v_2}) = (1,0)$, that is, from a disturbance localized at node 2 to a disturbance localized at node 1.
For each value of $({v_1,v_2})$, we compute the optimal inertia allocation for the cost function with $N=L$ and $S=I_2$.
The resulting optimizers are displayed in Figure~\ref{fig_simb} showing that inertia is allocated dominantly at the site of the disturbance, which is in line with previous case studies \cite{TSB-TL-DJH:15,AU-TB-GA:14}. Notice also that depending on the value of the budget ${m}_{\text{bdg}}$, the capacity constraints $\overline{m_i}$, and the ${v_i/d_i}$ ratios, the budget constraint may be active or not. Thus, perhaps surprisingly, sometimes not all inertia resources are allocated. Overall, the two-area case paints a surprisingly complex picture.

\subsection{A computational method for the general case}
\label{subsec: PerturbGrad}

In Subsections {\ref{subsec: analytic closed-form results} and \ref{subsec: two-area}}, we considered a subset of scenarios and cost functions that allowed the derivation of tractable reformulations and solutions of the inertia allocation problem \eqref{eq:MinOp1}. In this section, we consider the optimization problem in its full generality.
As in Section~\ref{subsec: two-area}, we denote by $P(m)$ the solution to the Lyapunov equation~\eqref{eq:MinOp1d}, and express the cost $\|\mathscr{G}\|_2^2$ as a function $f(m)$ of the vector of inertia allocations $m_i$.
In the following, we derive an efficient algorithm for the computation of the explicit gradient $\nabla f(m)$ of $f(m)$ in \eqref{eq:f}.

In general, most computational approaches can be sped up tremendously if an explicit gradient is available. In our case, an additional significant benefit of having a gradient $\nabla f(m)$ of $f(m)$ is that the large-scale set of nonlinear (in the decision variables) Lyapunov equations \eqref{eq:MinOp1d} 
can be eliminated and included into the gradient information. In the following, we provide an algorithm that achieves so, using the routine $\text{\bf Lyap}(A,Q)$, which returns the matrix $P$ that solves $PA+A^{\mathsf{T}}P + Q=0$ together with $Pz_0 = \vectorzeros[2n]$.

\begin{algorithm}[!ht]
\SetKw{KwInput}{Input}
\SetKw{KwOutput}{Output}
\KwInput{\rm current value $m$ of the decision variables\\}
\KwOutput{\rm numerical evaluation $g$ of the gradient $\nabla f(m)$}
\BlankLine
$A^{(0)} \leftarrow
\left[\begin{matrix}
0 & I \\
-M^{-1} L & -M^{-1}D
\end{matrix}\right]
\smallskip
$\;
$B^{(0)} \leftarrow
\left[\begin{matrix}
0 \\
M^{-1}\, {V}^{1/2}
\end{matrix}\right]
\smallskip
$\;
$P^{(0)} \leftarrow
\text{\bf Lyap}\left(A^{(0)}, C^{\mathsf{T}} C\right)$\;
\For{$i = 1, \ldots, n$}{
$\Phi \leftarrow \mathbbold e_{i}\mathbbold e_{i}^{\mathsf{T}}
\smallskip
$\;
$A^{(1)} \leftarrow
\left[\begin{matrix}
0 & 0 \\
\Phi M^{-2} L &  \Phi M^{-2} D
\end{matrix}\right]
\smallskip
$\;
$B^{(1)} \leftarrow
\left[\begin{matrix}
0 \\
-\Phi M^{-2}\, {V}^{1/2}
\end{matrix}\right]
$\;
$P^{(1)} \leftarrow 
\text{\bf Lyap}\left(A^{(0)}, P^{(0)} A^{(1)} + {A^{(1)}}^{\mathsf{T}}P^{(0)}\right)$\;
$g_i
\leftarrow
\text{\bf Trace}\left( 2 {B^{(1)}}^{\mathsf{T}} P^{(0)} B^{(0)}
+  {B^{(0)}}^{\mathsf{T}} P^{(1)} B^{(0)}\right)$\;
}
\caption{Gradient computation}
\label{algorithm}
\end{algorithm}

\begin{theorem}{\bf(Gradient computation)} 
\label{theorem: gradient computation}
Consider the objective function \eqref{eq:f}, where $P(m)$ is a function of $m$ via the Lyapunov equation \eqref{eq:MinOp1d}.
The objective function is differentiable for $m \in \real^{n}_{>0}$, and its gradient at $m$ is given by Algorithm \ref{algorithm}.
\end{theorem}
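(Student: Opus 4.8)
The plan is to differentiate the composite map $m \mapsto f(m) = \trace(B(m)^{\mathsf T} P(m) B(m))$ by computing, for each coordinate $m_i$, the directional derivative $\partial f / \partial m_i$ using the chain rule. The key observation is that $A$ and $B$ depend on $m$ only through $M^{-1}$, and $\partial M^{-1}/\partial m_i = -M^{-2} \mathbbold e_i \mathbbold e_i^{\mathsf T} = -M^{-2}\Phi$. So first I would write down $A^{(1)} := \partial A/\partial m_i$ and $B^{(1)} := \partial B/\partial m_i$ explicitly; these are exactly the matrices constructed inside the loop of Algorithm~\ref{algorithm}. Then, differentiating the Lyapunov equation \eqref{eq:MinOp1d}, $P A + A^{\mathsf T} P + C^{\mathsf T} C = 0$, with respect to $m_i$ gives
\begin{equation*}
P^{(1)} A + A^{\mathsf T} P^{(1)} + \left(P A^{(1)} + {A^{(1)}}^{\mathsf T} P\right) = 0 \,,
\end{equation*}
where $P^{(1)} := \partial P/\partial m_i$ and $C^{\mathsf T} C$ is constant in $m$. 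This identifies $P^{(1)}$ as the solution of a Lyapunov equation with the same $A$ as before and a new right-hand side $P A^{(1)} + {A^{(1)}}^{\mathsf T} P$, which is precisely the call $\text{\bf Lyap}(A^{(0)}, P^{(0)} A^{(1)} + {A^{(1)}}^{\mathsf T} P^{(0)})$ in the algorithm. Finally, applying the product rule to $f(m) = \trace(B^{\mathsf T} P B)$ and using the cyclic property of the trace,
\begin{equation*}
\frac{\partial f}{\partial m_i} = \trace\!\left(2 {B^{(1)}}^{\mathsf T} P B + B^{\mathsf T} P^{(1)} B\right) ,
\end{equation*}
which matches the assignment to $g_i$.

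The steps in order: (1) establish that $P(m)$ is well-defined and continuously differentiable in $m$ on $\real^n_{>0}$; (2) compute $A^{(1)}, B^{(1)}$ explicitly and verify they equal the loop quantities; (3) differentiate the Lyapunov constraint to get the linear equation characterizing $P^{(1)}$, and check that the auxiliary constraint $P^{(1)} v_0 = \vectorzeros[2n]$ holds (needed so that $\text{\bf Lyap}$ returns the correct $P^{(1)}$); (4) differentiate the trace expression and collect terms into the stated gradient formula.

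The main obstacle is step (1), specifically justifying differentiability of $P(m)$ — the solution of \eqref{eq:MinOp1d}–\eqref{eq: Lyap constraint} — as a function of $m$. Unlike a standard Lyapunov equation with Hurwitz $A$, here $A$ has the marginally stable mode $v_0$, so the linear map $X \mapsto XA + A^{\mathsf T} X$ is singular and the Lyapunov equation alone does not have a unique solution; uniqueness is restored only by appending $P v_0 = \vectorzeros[2n]$ (as in Lemma~\ref{lemma: Observability Gramian}). To get smoothness I would argue that the combined linear system (Lyapunov equation plus the constraint, viewed as a linear system in the entries of the symmetric matrix $P$) has a coefficient matrix that depends smoothly on $m$ and is invertible for every $m \in \real^n_{>0}$ — invertibility being exactly the content of the uniqueness claim in Lemma~\ref{lemma: Observability Gramian}. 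The implicit function theorem (or Cramer's rule on this finite linear system) then yields that $P(m)$ is $C^\infty$ in $m$, legitimizing the formal differentiation above. A secondary technical point, worth a line, is verifying in step (3) that the right-hand side $P^{(0)} A^{(1)} + {A^{(1)}}^{\mathsf T} P^{(0)}$ together with $A^{(0)}$ indeed produces a $P^{(1)}$ satisfying $P^{(1)} v_0 = \vectorzeros[2n]$; this follows from differentiating the identity $P(m) v_0 = \vectorzeros[2n]$ (note $v_0$ is independent of $m$), so it is automatic, but it confirms the algorithm calls $\text{\bf Lyap}$ on a consistent problem. The rest is routine matrix calculus and trace manipulation.
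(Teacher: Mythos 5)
Your proposal is correct and follows essentially the same route as the paper's proof: the paper computes directional derivatives via a first-order Taylor/perturbation expansion of $A$, $B$, and $P$ in a direction $\mu$ and collects the $\mathcal O(\delta)$ terms into the Lyapunov equation for $P^{(1)}$ and the trace formula for $g_i$, which is exactly your differentiated Lyapunov equation plus product rule specialized to $\mu = \mathbbold e_{i}$. If anything, your step (1) --- justifying smoothness of $P(m)$ via invertibility of the combined linear system (Lyapunov equation together with $Pv_0 = \vectorzeros[2n]$) and the implicit function theorem, and then deriving $P^{(1)}v_0=\vectorzeros[2n]$ by differentiating that identity --- is more careful than the paper, which posits the power-series expansion of $P$ and remarks that differentiability is thereby ``implicitly'' established.
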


The proof of Theorem \ref{theorem: gradient computation} is partially inspired by \cite{MF-XZ-FL-MRJ:14} and relies on a perturbation analysis of the Lyapunov equation \eqref{eq:MinOp1d} combined with Taylor and power series expansions.

\begin{proof}
In order to compute the gradient of \eqref{eq:f} at ${{m}}  \in \real^{n}_{>0}$, we make use of the relation
\begin{equation}
\nabla_{{\mu}} f({{m}}) = {\nabla  f({{m}})}^{\mathsf{T}} {\mu}\, ,
\label{eq: gradient 1}
\end{equation}
where $\nabla_{{\mu}} f({{m}})$ is the directional derivative of $f$ in the direction $\mu \in \real^n$, defined as 
\begin{equation}
\nabla_{{\mu}} f(m) = \lim_{\delta \to 0} \frac{f(m + \delta \mu)- f(m)}{\delta},
\label{eq: gradient 2}
\end{equation}
whenever this limit exists. 
From \eqref{eq:f} we have that 
\begin{equation}
f(m + \delta \mu) = 
\trace\left(
	B(m+\delta \mu)^{\mathsf{T}} P B(m+\delta \mu) 
	\right) \, ,
	\label{eq:fdirectional}
\end{equation}
where $P$ is a solution of the Lyapunov equation
\begin{equation}
P A(m+\delta \mu) + {A(m+\delta \mu)}^{\mathsf{T}} {P} + {C}^{\mathsf{T}} {C}=0 \,
\label{eq: Lyap-delta}
\end{equation}
and where by $A(m+\delta \mu)$ we denote the system matrix defined in \eqref{eq: input}, evaluated at $m+\delta \mu$. 
The matrices $A(m+\delta \mu)$ and $B(m+\delta \mu)$ viewed as functions of scalar $\delta$ can thus be expanded in a Taylor series around $\delta=0$ as
\begin{equation}
\label{eq: matrix expansions}
\begin{split}
A(m+\delta \mu) &= A^{(0)}_{(m, \mu)} + A^{(1)}_{(m, \mu)} \delta + {\mc O(\delta^2)} \,, \\
B({{m}} +\delta \mu) &= B^{(0)}_{(m, \mu)}+ B^{(1)}_{(m, \mu)} \delta + {\mc O(\delta^2)}\,, 
\end{split}
\end{equation}
with coefficients $A^{(i)}_{(m, \mu)}$ and $B^{(i)}_{(m, \mu)}$, $i \in \{0,1\}$.
\smallskip
To compute the coefficients of the Taylor expansion in \eqref{eq: matrix expansions}, we recall the scalar series expansion of $1/(m_i+\delta \mu_i)$ around $\delta = 0$:
$$
\frac{1}{(m_i+\delta \mu_i)}
=
\frac{1}{m_i} - \frac{\delta \mu_i}{m_i^2} + \mc O(\delta^2).
$$
Using the shorthand ${{\Phi}}=\diag({\mu_i})$, we therefore have
\begin{align*}
A^{(0)}_{(m, \mu)}
&=
\begin{bmatrix}
0 & I \\ -M^{-1}L & -M^{-1}D
\end{bmatrix},\,
B^{(0)}_{(m, \mu)}=
\begin{bmatrix}
0 \\ M^{-1}{V}^\frac{1}{2}
\end{bmatrix},\\
A^{(1)}_{(m, \mu)}
&=
\begin{bmatrix}
0 & 0 \\ \Phi M^{-2}L & \Phi M^{-2}D
\end{bmatrix},\,
B^{(1)}_{(m, \mu)}=
\begin{bmatrix}
0 \\ -\Phi M^{-2}{V}^\frac{1}{2}
\end{bmatrix}.
\end{align*}
 
Accordingly, the solution to the {Lyapunov equation \eqref{eq: Lyap-delta} can be expanded in a power series as} 
\begin{equation}
\label{eq:5}
P = P({{m}} +\delta \mu)= P^{(0)}_{(m, \mu)}+ P^{(1)}_{(m, \mu)} \delta + \mc O(\delta^2),
\end{equation}
and therefore the Lyapunov equation \eqref{eq: Lyap-delta} becomes 
\begin{multline*}
(P^{(0)}+{\delta} P^{(1)}+{\mc O(\delta^2)}) (A^{(0)}+{\delta} A^{(1)}+{ \mc O(\delta^2)})
+ \\
(A^{(0)}+{\delta} A^{(1)}+{\mc O(\delta^2)})^{\mathsf{T}} (P^{(0)}+ {\delta} P^{(1)}+{\mc O(\delta^2)})+C^{\mathsf{T}}C=0 ,
\end{multline*}
where we dropped the subscript $(m,\mu)$ for readability.
By collecting terms associated with powers of $\delta$, we obtain two Lyapunov equations determining $P^{(0)}$ and $P^{(1)}$:
\begin{subequations}
\label{lyap0}
\begin{equation}
\label{lyap1}
P^{(0)} A^{(0)}+{A^{(0)}}^{\mathsf{T}} P^{(0)}+C^{\mathsf{T}} C=0 \, ,
\end{equation}
\begin{equation}
\label{lyap2}
P^{(1)} A^{(0)}+{A^{(0)}}^{\mathsf{T}} P^{(1)}+(P^{(0)} A^{(1)}+{A^{(1)}}^{\mathsf{T}} P^{(0)})=0 \, .
\end{equation}
\end{subequations}
 By the same reasoning as used for equation \eqref{eq: Lyap}, the first Lyapunov equation \eqref{lyap1} is feasible with a positive semidefinite $P^{(0)}$ satisfying 
$P^{(0)} z_0 = \vectorzeros[2n]$. 
The second Lyapunov equation \eqref{lyap2} is feasible by analogous arguments.
Finally, by using \eqref{eq:fdirectional} together with \eqref{eq: matrix expansions} and \eqref{eq:5}, we obtain 
\begin{equation*}
\label{eq:7}
f(m+\delta\mu)= f^{(0)}_{(m, \mu)} + f^{(1)}_{(m, \mu)} \delta + {\mc O(\delta^{2})} \, ,
\end{equation*}
where $f^{(0)}_{(m, \mu)} = f(m)$ and
\begin{equation}
\begin{split}
f^{(1)}_{(m, \mu)} 
&= 
\trace\Big( 
2 \,{{B^{(1)}_{(m, \mu)}}}^{\mathsf{T}} P^{(0)}_{(m, \mu)} B^{(0)}_{(m, \mu)} \\
& \quad + {B^{(0)}_{(m,\mu)}}^{\mathsf{T}} P^{(1)}_{(m, {\mu})} B^{(0)}_{(m,\mu)}\Big)\, .
\end{split}
\label{eq:9}
\end{equation}
From \eqref{eq: gradient 2}, it follows that 
$\nabla_\mu f(m) = f^{(1)}_{(m, \mu)}$ as
 defined in \eqref{eq:9}, thereby implicitly establishing differentiability of $f({{m}})$. 

This concludes the proof, as the algorithm computes each component of the gradient $\nabla f(m)$ by using the relation \eqref{eq: gradient 1} with the special choice of $\mu = \mathbbold e_{i}$ for $i \in \until n$. 
\end{proof}

\subsection{The planning problem: Economic allocation of resources}
\label{Subsection: l1_norm}

In this subsection, we focus on the planning problem of optimally allocating virtual inertia when economic reasons suggest that only a limited number of virtual inertia devices should be deployed (rather than at every grid bus). 
Since this problem is generally combinatorial, we solve a modified optimal allocation problem, where an additional $\ell_{1}$-regularization penalty is imposed, in order to promote a sparse solution  \cite{EJC-MBW-SPB:08}.

The regularized optimal inertia allocation problem is then \begin{subequations}
\label{eq:SparMin}
\begin{align}
& \underset{P\,,\, m_{i}}{\text{\it minimize}}
& & J_{\gamma}(m, P)=\|\mathscr{G}\|_{2}^{2} +\gamma \|m-\underline{m}\|_{1} \label{eq:SparMina} \\
 & {\text{\it subject to}}
& & \eqref{eq:MinOp1b}-\eqref{eq:MinOp1d}\,,
\end{align}
\end{subequations}
where $\gamma \geq 0$ trades off the sparsity penalty and the original objective function. 

As in \eqref{eq:MinOp1c} the allocations $m_i$ are lower bounded by a positive $\underline{m_i}$, the objective \eqref{eq:SparMina} can be rewritten as:
\begin{equation}
\label{eq:SparMinRed}
{J_{\gamma}(m, P)}=\trace \left({B}^{\mathsf{T}}{P B}\right) + \gamma\, \vectorones[n]^{\sf T}\,(m-\underline{m})\, . 
\end{equation}
Observe that the regularization term in the cost \eqref{eq:SparMinRed} is linear and differentiable. Thus,
 problem \eqref{eq:SparMinRed} fits well into our gradient computation algorithm, and a solution can be determined within the fold of Algorithm \ref{algorithm} by incorporating the penalty term.  Likewise, our analytic results in Section \ref{subsec: analytic closed-form results} can be re-derived for the cost function \eqref{eq:SparMinRed}. We highlight the utility of the performance-sparsity trade-off \eqref{eq:SparMinRed}  in Section \ref{Section: Case study}. 
 
\subsection{The min-max problem: optimal robust allocation}
\label{Subsection: Robustness}

Thus far we have assumed knowledge of the disturbance strengths encoded in the matrix $V$. While empirical disturbance distributions from historical data  are generally available to system operators, the truly problematic and devastating faults in power systems are rare events that are poorly predicted by any (empirical) distribution. Given this inherent uncertainty, it is desirable to obtain an inertia allocation profile which is optimal even in presence of the most detrimental disturbance. This problem belongs to the domain of robust optimization or a zero-sum game between the power system operator and the adversarial disturbance. The robust inertia allocation problem can then be formulated as the min-max optimization problem%
\begin{subequations}%
\label{eq:genminprobfb}%
\begin{align}%
& \underset{{m}_{i}}{\text{\it minimize}} \quad \underset{v_i}{\text{\it maximize}} \quad f(m,v)
\label{eq:genminprob1afb}\\
 & \text{\it subject to} \qquad v \in \mathbb{V}, \\
 & \qquad \qquad \qquad   \eqref{eq:MinOp1b}-\eqref{eq:MinOp1c} \label{eq:genminprob4bfb},
\end{align}
\end{subequations}
\noindent {where $f(m,v)$ = $\trace(B(m,v)^{\sf T}P(m)B(m,v))$} and where $\mathbb{V}$ is convex hull of the set of possible disturbances with a non-empty interior. As a special instance, consider
\begin{equation}
\mathbb{V} = \left\{v \in \real^{n}:\, \vectorones[n]^{\sf T} v \leq{{v}_{\textup{bdg}}},\, 0 \leq {v}_{i} \right\} \,,
\label{eq:genminprob2bfb}
\end{equation}
where we normalized the disturbances by ${v}_{\textup{bdg}}>0$.

Recall from \eqref{eq: input}, \eqref{eq:ObservGram} that the objective $f(m,v)$ is linear in $v$, and we can write it as $f(m,v)=v^{\sf T} g(m)$, where $g_i(m)={d f(m,v)}/{d v_i}={{P(m)_{2,2}}_i}/{m_i^2}$ for $i \in \until n$. Hence, by strong duality, we can rewrite the inner maximization problem
\begin{subequations}
\begin{align}
& \underset{{v}_{i}}{\text{\it maximize}} \quad v^{\sf T}g(m) \label{eq:genminprob3bfb}\\
 & \text{\it subject to} \quad \quad \eqref{eq:genminprob4bfb}, \, \eqref{eq:genminprob2bfb}
\end{align}%
\end{subequations}%
as the equivalent dual minimization problem
\begin{subequations}
\begin{align}
& \underset{\chi, \, \mu_i}{\text{\it minimize}} \quad v_{\textup{bdg}}\, {\chi} \label{eq:dualprob1}\\
 & \text{\it subject to} \quad \quad \chi \geq 0, \quad \mu_{i} \geq 0 \,,\quad \forall i,\label{eq:dualprob2}\\
 & \qquad \qquad \qquad   g_i(m)+\mu_i =\chi \,,\quad \forall i\label{eq:dualprob3}\,,
\end{align}%
\end{subequations}%
where $\chi$ and $\mu_i$ are the dual variables associated with the constraints \eqref{eq:genminprob2bfb}. The min-max problem \eqref{eq:genminprobfb} is then equivalent\,to:%
\begin{subequations}%
\label{eq:genminprobfb-2}%
\begin{align}%
& \underset{{m}_{i},\, {\chi},\,\mu_i}{\text{\it minimize}} \quad v_{\textup{bdg}}\, {\chi}\\
 & \text{\it subject to} \quad \quad \eqref{eq:MinOp1b}-\eqref{eq:MinOp1c}, \\
& \qquad \qquad \qquad   \eqref{eq:dualprob2}-\eqref{eq:dualprob3}.
\end{align}%
\end{subequations}%
The minimization problem \eqref{eq:genminprobfb-2} has a convex objective and constraints, barring \eqref{eq:dualprob3}. However, we already have the gradient of the individual elements, $d g_i(m)/dm$ which can be computed from Algorithm~\ref{algorithm} as $d f(m,v)/dm$ by substituting $v_i=1$ and $v_{j}=0\, \forall\, j \neq i$. The availability of the gradient of this set of non-linear equality constraints considerably speeds up the computation of the minimizer.

By direct inspection or computation (see Section \ref{Section: Case study})
we observe that the robust optimal allocation profile tends to make the cost \eqref{eq:genminprob1afb} indifferent with respect to the location of the disturbance, as is customary for similar classes of min-max (adversarial) optimization problems.

For the special case of {\it primary control effort minimization} as in Theorem \ref{theorem: Performance index for primary control cost}, the min-max problem \eqref{eq:genminprobfb} simplifies to
\begin{subequations}%
\label{eq:genminprobfb-3}%
\begin{align}%
& \underset{{m}_{i},\, {\chi},\,\mu_i}{\text{\it minimize}} \quad v_{\textup{bdg}}\, {\chi}\\
& \text{\it subject to} \quad \quad {\chi} \geq 0, \quad \mu_{i} \geq 0 \,,\quad \forall i,\label{eq:dualprobred2}\\
& \qquad \qquad \qquad \eqref{eq:MinOp1b}-\eqref{eq:MinOp1c}, \\
 & \qquad \qquad \qquad   \dfrac{1}{m_i}+\mu_i ={\chi} \,,\quad \forall i\label{eq:dualprobred3}.
\end{align}%
\end{subequations}%
In this case, the robust optimal allocation profile tends to make the inertia allocations $m_i$ equal for all $i$, inducing a {\em valley-filling} strategy that allocates the entire inertia budget and prioritizes buses with lowest inertia first.

\section{Case study: 12-Bus-Three-Region System}
\label{Section: Case study}

\begin{figure}[t]
\centering
\includegraphics[width=3.4in]{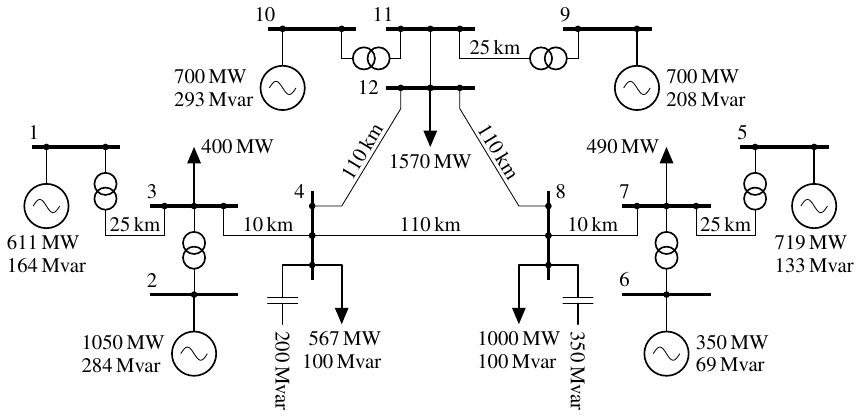}
\caption{A 12 bus three-region test case. Grid parameters are available in \cite{TSB-TL-DJH:15}.}
\label{fig_sim}
\end{figure}

In this section, we investigate a 12-bus case study illustrated in Figure~\ref{fig_sim}. The system parameters are based on a modified two-area system from \cite[Example 12.6]{PK:94} with an additional third area, as introduced in \cite{TSB-TL-DJH:15}. In this system, three buses are not available for inertia allocation, and are therefore eliminated via Kron reduction, resulting in a 9-bus equivalent.

We investigate this example computationally, using Algorithm~\ref{algorithm} to drive standard gradient-based optimization routines, while highlighting parallels to our analytic results.
We analyze different parametric scenarios and compare the inertia allocation and the performance of the proposed numerical optimization (a local optimum) with two plausible heuristics: the first one can be deduced from the conclusions in \cite{AU-TB-GA:14,TSB-TL-DJH:15}, and consists in allocating the available budget uniformly across the grid, in the absence of capacity constraints, that is, $m_{i} = {m}_{\text{uni}} = {m}_{\text{bdg}}/n$; the second follows from the intuition developed in Theorem \ref{theorem: Performance index for primary control cost}, and consists in allocating the maximum inertia allowed by the bus capacity, in the absence of a budget constraint, that is, $m_i = \overline{m_i}$ (which we set as $\overline{m_{i}}= 4 \underline{m_{i}}$).
We consider two disturbance scenarios:
a {\it uniform} disturbance affecting all nodes identically, and
a {\it localized} disturbance at node 4 alone.
For the performance metric, we choose $N=L$ and $S=I_9$.

\newlength{\barplotwidth}
\setlength{\barplotwidth}{3.4in}

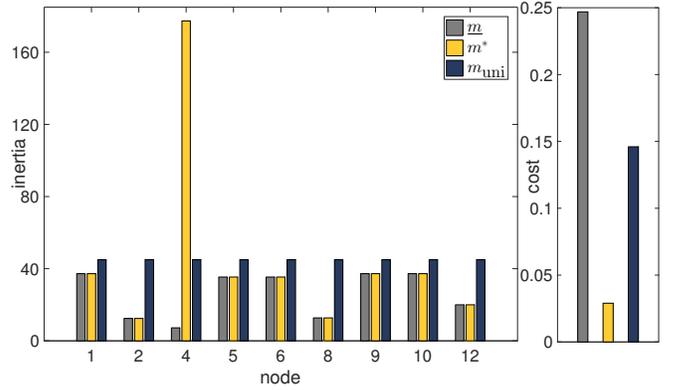
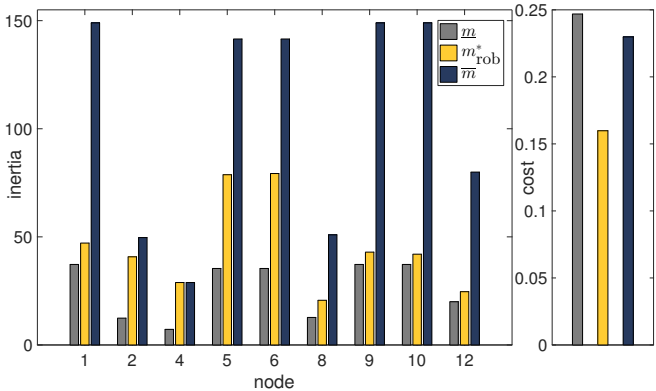
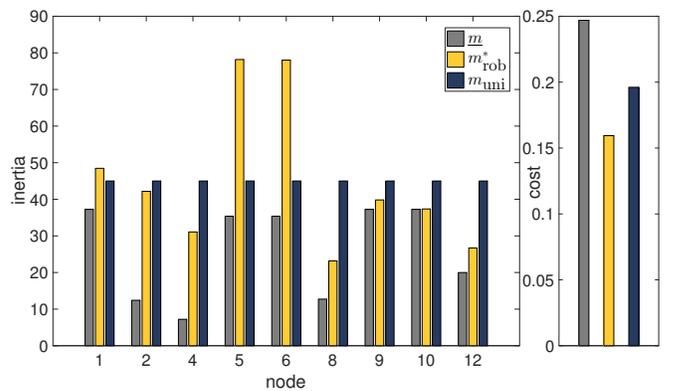
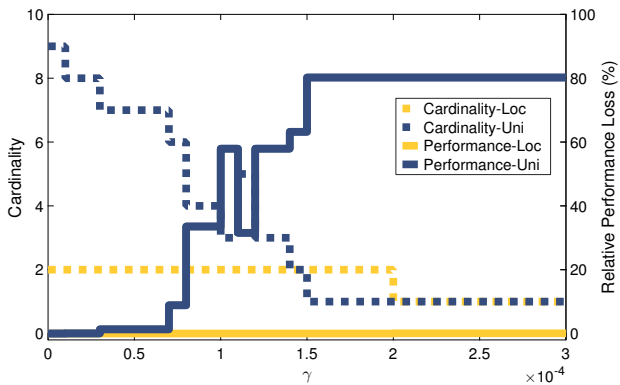
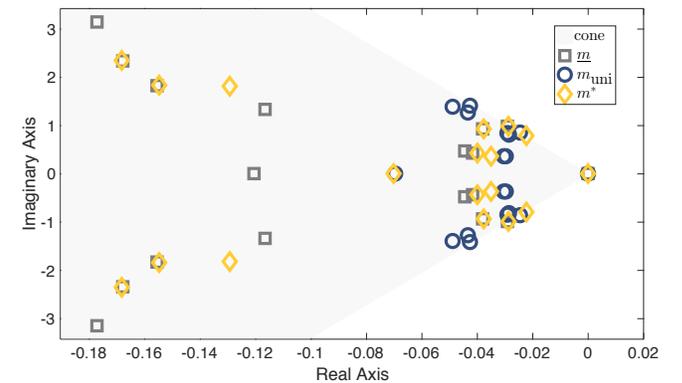
\begin{figure*}[htbp]
\begin{subfigure}[b]{0.49\textwidth}
%
%
\definecolor{mycolor1}{rgb}{0.49412,0.49412,0.49412}%
\definecolor{mycolor2}{HTML}{D59B2D}%
\definecolor{mycolor3}{HTML}{253F5B}%
\hspace*{-0.5em}
\begin{tikzpicture}

\begin{axis}[%
width=2.5in,
height=1.5in,
at={(-0.289in,4.95in)},
scale only axis,
bar shift auto,
xmin=0.511111111111111,
xmax=9.48888888888889,
xtick={1,2,3,4,5,6,7,8,9},
xticklabels={$1$, $2$, $4$, $5$, $6$, $8$, $9$, $10$, $12$},
xlabel style={font=\color{black}},
xlabel={\small{node}},
ymin=0,
ymax=155,
ytick={ 0,  50, 100, 150},
ymajorgrids,
ylabel style={font=\color{black}},
ylabel={\small{inertia}},
axis background/.style={fill=white},
yticklabel style = {font=\footnotesize,xshift=0ex},
xticklabel style = {font=\footnotesize,yshift=0ex},
no markers,
ylabel shift=-5pt,
every axis plot/.append style={ultra thin},
legend entries={$\underline{m}$, $m^*$, $\overline{m}$},
legend style={legend cell align=left, align=left, draw=none, font=\small},
legend style={at={(0.35,0.95)}, anchor=north east}
]

\addplot[ybar, bar width=4, fill=mycolor1, draw=black, area legend] table[row sep=crcr] {%
1	37.24225668\\
2	12.41408556\\
3	7.219268219\\
4	35.38014385\\
5	35.38014385\\
6	12.73239545\\
7	37.24225668\\
8	37.24225668\\
9	19.98986085\\
};
\addlegendentry{$\, \underline{m}$}

\addplot[ybar, bar width=4, fill=mycolor2, draw=black, area legend] table[row sep=crcr] {%
1	148.9506\\
2	33.7339\\
3	28.8696\\
4	141.4971\\
5	57.804\\
6	21.4995\\
7	147.1673\\
8	38.7739\\
9	24.8301\\
};
\addlegendentry{$\, m^*$}

\addplot[ybar, bar width=4, fill=mycolor3, draw=black, area legend] table[row sep=crcr] {%
1	148.9690267\\
2	49.65634224\\
3	28.87707287\\
4	141.5205754\\
5	141.5205754\\
6	50.92958179\\
7	148.9690267\\
8	148.9690267\\
9	79.95944341\\
};
\addlegendentry{$\, \overline{m}$}

\end{axis}

\begin{axis}[%
width=0.27in,
height=1.5in,
at={(2.58in, 4.95in)},
scale only axis,
bar shift auto,
xmin=0.4,
xmax=1.7,
xtick={\empty},
ymin=0,
ymax=0.165,
ytick={0, 0.05,  0.1, 0.15},
yticklabels={$0$, $0.05$,  $0.1$, $0.15$},
ylabel style={font=\color{black}},
ylabel={\small{cost}},
ymajorgrids,
axis background/.style={fill=white},
yticklabel style = {font=\footnotesize,xshift=0ex},
xticklabel style = {font=\footnotesize,yshift=0ex},
no markers,
ylabel shift=-9pt,
every axis plot/.append style={ultra thin},
legend style={legend cell align=left, align=left, draw=black}
]
\addplot[ybar, bar width=4, fill=mycolor1, draw=black, area legend] table[row sep=crcr] {%
1	0.1597\\
};

\addplot[ybar, bar width=4, fill=mycolor2, draw=black, area legend] table[row sep=crcr] {%
1.05	0.142\\
};

\addplot[ybar, bar width=4, fill=mycolor3, draw=black, area legend] table[row sep=crcr] {%
1.1	0.1527\\
};
\end{axis}
\end{tikzpicture}%
\caption{Optimal inertia allocation for a uniform disturbance subject to capacity constraints \eqref{eq:MinOp1c}.}
\label{fig:grid1} \par \medskip \vfill
\end{subfigure}%
\hfill \begin{subfigure}[b]{0.49\textwidth}
%
%
\definecolor{mycolor1}{rgb}{0.49412,0.49412,0.49412}%
\definecolor{mycolor2}{HTML}{D59B2D}%
\definecolor{mycolor3}{HTML}{253F5B}%
\hspace*{-0.5em}
\begin{tikzpicture}

\begin{axis}[%
width=2.5in,
height=1.5in,
at={(-0.289in,4.95in)},
scale only axis,
bar shift auto,
xmin=0.511111111111111,
xmax=9.48888888888889,
xtick={1,2,3,4,5,6,7,8,9},
xticklabels={$1$, $2$, $4$, $5$, $6$, $8$, $9$, $10$, $12$},
xlabel style={font=\color{black}},
xlabel={\small{node}},
ymin=0,
ymax=93,
ytick={ 0,  30, 60, 90},
ymajorgrids,
ylabel style={font=\color{black}},
ylabel={\small{inertia}},
axis background/.style={fill=white},
yticklabel style = {font=\footnotesize,xshift=0ex},
xticklabel style = {font=\footnotesize,yshift=0ex},
no markers,
ylabel shift=-5pt,
every axis plot/.append style={ultra thin},
legend entries={$\underline{m}$, $m^*$, $m_\text{uni}$},
legend style={legend cell align=left, align=left, draw=none, font=\small},
legend style={at={(0.35,0.95)}, anchor=north east}
]

\addplot[ybar, bar width=4, fill=mycolor1, draw=black, area legend] table[row sep=crcr] {%
1	37.24225668\\
2	12.41408556\\
3	7.219268219\\
4	35.38014385\\
5	35.38014385\\
6	12.73239545\\
7	37.24225668\\
8	37.24225668\\
9	19.98986085\\
};
\addlegendentry{$\, \underline{m}$}

\addplot[ybar, bar width=4, fill=mycolor2, draw=black, area legend] table[row sep=crcr] {%
1	85.375\\
2	23.63\\
3	26.423\\
4	80.569\\
5	42.668\\
6	18.549\\
7	70.457\\
8	37.242\\
9	19.991\\
};
\addlegendentry{$\, m^*$}

\addplot[ybar, bar width=4, fill=mycolor3, draw=black, area legend] table[row sep=crcr] {%
1	45\\
2	45\\
3	45\\
4	45\\
5	45\\
6	45\\
7	45\\
8	45\\
9	45\\
};
\addlegendentry{$\, \overline{m}$}

\end{axis}

\begin{axis}[%
width=0.27in,
height=1.5in,
at={(2.58in, 4.95in)},
scale only axis,
bar shift auto,
xmin=0.4,
xmax=1.7,
xtick={\empty},
ymin=0,
ymax=0.165,
ytick={0, 0.05,  0.1, 0.15},
yticklabels={$0$, $0.05$,  $0.1$, $0.15$},
ylabel style={font=\color{white!15!black}},
ylabel={\small{cost}},
ymajorgrids,
yticklabel style = {font=\footnotesize,xshift=0ex},
xticklabel style = {font=\footnotesize,yshift=0ex},
axis background/.style={fill=white},
no markers,
ylabel shift=-9pt,
every axis plot/.append style={ultra thin},
legend style={legend cell align=left, align=left, draw=black}
]
\addplot[ybar, bar width=4, fill=mycolor1, draw=black, area legend] table[row sep=crcr] {%
1	0.1597\\
};

\addplot[ybar, bar width=4, fill=mycolor2, draw=black, area legend] table[row sep=crcr] {%
1.05	0.1461\\
};

\addplot[ybar, bar width=4, fill=mycolor3, draw=black, area legend] table[row sep=crcr] {%
1.1	0.1566\\
};
\end{axis}
\end{tikzpicture}%
\caption{Optimal inertia allocation for a uniform disturbance subject to budget constraint \eqref{eq:MinOp1b}.}
\label{fig:grid2} \par \medskip \vfill
\end{subfigure}

\begin{subfigure}[t]{0.49\textwidth}
%
%
\definecolor{mycolor1}{rgb}{0.49412,0.49412,0.49412}%
\definecolor{mycolor2}{HTML}{D59B2D}%
\definecolor{mycolor3}{HTML}{253F5B}%
\hspace*{-0.5em}
\begin{tikzpicture}

\begin{axis}[%
width=2.5in,
height=1.5in,
at={(-0.289in,4.95in)},
scale only axis,
bar shift auto,
xmin=0.511111111111111,
xmax=9.48888888888889,
xtick={1,2,3,4,5,6,7,8,9},
xticklabels={$1$, $2$, $4$, $5$, $6$, $8$, $9$, $10$, $12$},
xlabel style={font=\color{black}},
xlabel={\small{node}},
ymin=0,
ymax=155,
ytick={ 0,  50, 100, 150},
ylabel style={font=\color{black}},
ylabel={\small{inertia}},
ymajorgrids,
ylabel shift=-5pt,
axis background/.style={fill=white},
yticklabel style = {font=\footnotesize,xshift=0ex},
xticklabel style = {font=\footnotesize,yshift=0ex},
no markers,
every axis plot/.append style={ultra thin},
legend entries={$\underline{m}$, $m^*$, $\overline{m}$},
legend style={legend cell align=left, align=left, draw=none, font=\small},
legend style={at={(0.35,0.95)}, anchor=north east}
]

\addplot[ybar, bar width=4, fill=mycolor1, draw=black, area legend] table[row sep=crcr] {%
1	37.24225668\\
2	12.41408556\\
3	7.219268219\\
4	35.38014385\\
5	35.38014385\\
6	12.73239545\\
7	37.24225668\\
8	37.24225668\\
9	19.98986085\\
};
\addlegendentry{$\, \underline{m}$}

\addplot[ybar, bar width=4, fill=mycolor2, draw=black, area legend] table[row sep=crcr] {%
1	37.244\\
2	12.414\\
3	28.877\\
4	35.383\\
5	38.4\\
6	12.736\\
7	37.245\\
8	37.246\\
9	19.992\\
};
\addlegendentry{$\, m^*$}

\addplot[ybar, bar width=4, fill=mycolor3, draw=black, area legend] table[row sep=crcr] {%
1	148.9690267\\
2	49.65634224\\
3	28.87707287\\
4	141.5205754\\
5	141.5205754\\
6	50.92958179\\
7	148.9690267\\
8	148.9690267\\
9	79.95944341\\
};
\addlegendentry{$\, \overline{m}$}
\end{axis}

\begin{axis}[%
width=0.27in,
height=1.5in,
at={(2.58in, 4.95in)},
scale only axis,
bar shift auto,
xmin=0.4,
xmax=1.7,
xtick={\empty},
ymin=0,
ymax=0.28,
ytick={0, 0.09,  0.18, 0.27},
yticklabels={$0$, $0.09$,  $0.18$, $0.27$},
ylabel style={font=\color{black}},
ylabel={\small{cost}},
ymajorgrids,
ylabel shift=-9pt,
yticklabel style = {font=\footnotesize,xshift=0ex},
xticklabel style = {font=\footnotesize,yshift=0ex},
no markers,
every axis plot/.append style={ultra thin},
axis background/.style={fill=white}
]
\addplot[ybar, bar width=4, fill=mycolor1, draw=black, area legend] table[row sep=crcr] {%
1	0.2469\\
};

\addplot[ybar, bar width=4, fill=mycolor2, draw=black, area legend] table[row sep=crcr] {%
1.05	0.1269\\
};

\addplot[ybar, bar width=4, fill=mycolor3, draw=black, area legend] table[row sep=crcr] {%
1.1	0.2298\\
};
\end{axis}

\end{tikzpicture}%
\caption{Optimal inertia allocation for a localized disturbance at node 4 subject to capacity constraints \eqref{eq:MinOp1c}.}
\label{fig:grid3} \par \medskip \vfill
\end{subfigure}%
\hfill \begin{subfigure}[t]{0.49\textwidth}
%
%
\definecolor{mycolor1}{rgb}{0.49412,0.49412,0.49412}%
\definecolor{mycolor2}{HTML}{D59B2D}%
\definecolor{mycolor3}{HTML}{253F5B}%
\hspace*{-0.5em}
\begin{tikzpicture}

\begin{axis}[%
width=2.5in,
height=1.5in,
at={(-0.289in,4.95in)},
scale only axis,
bar shift auto,
xmin=0.511111111111111,
xmax=9.48888888888889,
xtick={1,2,3,4,5,6,7,8,9},
xticklabels={$1$, $2$, $4$, $5$, $6$, $8$, $9$, $10$, $12$},
xlabel style={font=\color{black}},
xlabel={\small{node}},
ymin=0,
ymax=186,
ytick={  0,  45,  90, 135, 180},
ylabel style={font=\color{black}},
ylabel={\small{inertia}},
ymajorgrids,
axis background/.style={fill=white},
yticklabel style = {font=\footnotesize,xshift=0ex},
xticklabel style = {font=\footnotesize,yshift=0ex},
no markers,
ylabel shift=-5pt,
every axis plot/.append style={ultra thin},
legend entries={$\underline{m}$, $m^*$, $m_\text{uni}$},
legend style={legend cell align=left, align=left, draw=none, font=\small},
legend style={at={(0.26,0.95)}, anchor=north east}
]

\addplot[ybar, bar width=4, fill=mycolor1, draw=black, area legend] table[row sep=crcr] {%
1	37.24225668\\
2	12.41408556\\
3	7.219268219\\
4	35.38014385\\
5	35.38014385\\
6	12.73239545\\
7	37.24225668\\
8	37.24225668\\
9	19.98986085\\
};
\addlegendentry{$\, \underline{m}$}

\addplot[ybar, bar width=4, fill=mycolor2, draw=black, area legend] table[row sep=crcr] {%
1	37.2424\\
2	12.4145\\
3	177.3716\\
4	35.3804\\
5	35.3804\\
6	12.7326\\
7	37.2425\\
8	37.2425\\
9	19.9901\\
};
\addlegendentry{$\, m^*$}

\addplot[ybar, bar width=4, fill=mycolor3, draw=black, area legend] table[row sep=crcr] {%
1	45\\
2	45\\
3	45\\
4	45\\
5	45\\
6	45\\
7	45\\
8	45\\
9	45\\
};
\addlegendentry{$\, m_{\textup{uni}}$}

\end{axis}

\begin{axis}[%
width=0.27in,
height=1.5in,
at={(2.58in, 4.95in)},
scale only axis,
bar shift auto,
xmin=0.4,
xmax=1.7,
xtick={\empty},
ymin=0,
ymax=0.28,
ytick={0, 0.09,  0.18, 0.27},
yticklabels={$0$, $0.09$,  $0.18$, $0.27$},
ylabel style={font=\color{black}},
ylabel={\small{cost}},
axis background/.style={fill=white},
ymajorgrids,
yticklabel style = {font=\footnotesize,xshift=0ex},
xticklabel style = {font=\footnotesize,yshift=0ex},
no markers,
ylabel shift=-9pt,
every axis plot/.append style={ultra thin},
legend style={legend cell align=left, align=left, draw=black}
]
\addplot[ybar, bar width=4, fill=mycolor1, draw=black, area legend] table[row sep=crcr] {%
1	0.2469\\
};

\addplot[ybar, bar width=4, fill=mycolor2, draw=black, area legend] table[row sep=crcr] {%
1.05	0.029\\
};

\addplot[ybar, bar width=4, fill=mycolor3, draw=black, area legend] table[row sep=crcr] {%
1.1	0.146\\
};
\end{axis}
\end{tikzpicture}%
\caption{Optimal inertia allocation for a localized disturbance at node 4 subject to budget constraint \eqref{eq:MinOp1b}.}
\label{fig:grid4}
\end{subfigure}

\begin{subfigure}[t]{0.49\textwidth}
%
%
\definecolor{mycolor1}{rgb}{0.49412,0.49412,0.49412}%
\definecolor{mycolor2}{HTML}{D59B2D}%
\definecolor{mycolor3}{HTML}{253F5B}%
\hspace*{-0.5em}
\begin{tikzpicture}

\begin{axis}[%
width=2.5in,
height=1.5in,
at={(-0.289in,4.95in)},
scale only axis,
bar shift auto,
xmin=0.511111111111111,
xmax=9.48888888888889,
xtick={1,2,3,4,5,6,7,8,9},
xticklabels={$1$, $2$, $4$, $5$, $6$, $8$, $9$, $10$, $12$},
xlabel style={font=\color{black}},
xlabel={\small{node}},
ymin=0,
ymax=155,
ytick={ 0,  50, 100, 150},
ylabel style={font=\color{black}},
ylabel={\small{inertia}},
ymajorgrids,
axis background/.style={fill=white},
yticklabel style = {font=\footnotesize,xshift=0ex},
xticklabel style = {font=\footnotesize,yshift=0ex},
no markers,
ylabel shift=-5pt,
every axis plot/.append style={ultra thin},
legend entries={$\underline{m}$, $m_\text{rob}$, $\overline{m}$},
legend style={legend cell align=left, align=left, draw=none, font=\small},
legend style={at={(0.4,0.95)}, anchor=north east}
]

\addplot[ybar, bar width=4, fill=mycolor1, draw=black, area legend] table[row sep=crcr] {%
1	37.24225668\\
2	12.41408556\\
3	7.219268219\\
4	35.38014385\\
5	35.38014385\\
6	12.73239545\\
7	37.24225668\\
8	37.24225668\\
9	19.98986085\\
};
\addlegendentry{$\, \underline{m}$}

\addplot[ybar, bar width=4, fill=mycolor2, draw=black, area legend] table[row sep=crcr] {%
1	47.1339147534208\\
2	40.7726904247314\\
3	28.877035786861\\
4	78.7379132202302\\
5	79.2944091780122\\
6	20.6699475213137\\
7	42.9442918788029\\
8	41.9911873533226\\
9	24.6374293768444\\
};
\addlegendentry{$\, m_{\textup{rob}}^*$}

\addplot[ybar, bar width=4, fill=mycolor3, draw=black, area legend] table[row sep=crcr] {%
1	148.9690267\\
2	49.65634224\\
3	28.87707287\\
4	141.5205754\\
5	141.5205754\\
6	50.92958179\\
7	148.9690267\\
8	148.9690267\\
9	79.95944341\\
};
\addlegendentry{$\, \overline{m}$}

\end{axis}

\begin{axis}[%
width=0.27in,
height=1.5in,
at={(2.58in, 4.95in)},
scale only axis,
bar shift auto,
xmin=0.4,
xmax=1.7,
xtick={\empty},
ymin=0,
ymax=0.28,
ytick={0, 0.09,  0.18, 0.27},
yticklabels={$0$, $0.09$,  $0.18$, $0.27$},
ylabel style={font=\color{black}},
ylabel={\small{cost}},
ymajorgrids,
axis background/.style={fill=white},
yticklabel style = {font=\footnotesize,xshift=0ex},
xticklabel style = {font=\footnotesize,yshift=0ex},
no markers,
ylabel shift=-9pt,
every axis plot/.append style={ultra thin},
legend style={legend cell align=left, align=left, draw=black}
]
\addplot[ybar, bar width=4, fill=mycolor1, draw=black, area legend] table[row sep=crcr] {%
1	0.2469\\
};

\addplot[ybar, bar width=4, fill=mycolor2, draw=black, area legend] table[row sep=crcr] {%
1.05	0.1598\\
};

\addplot[ybar, bar width=4, fill=mycolor3, draw=black, area legend] table[row sep=crcr] {%
1.1	0.2298\\
};

\end{axis}


\end{tikzpicture}%
\caption{Robust inertia allocation subject to capacity constraints \eqref{eq:MinOp1c}.}
\label{fig:robust1}
\end{subfigure}%
\hfill \begin{subfigure}[t]{0.49\textwidth}
%
%
\definecolor{mycolor1}{rgb}{0.49412,0.49412,0.49412}%
\definecolor{mycolor2}{HTML}{D59B2D}%
\definecolor{mycolor3}{HTML}{253F5B}%
\hspace*{-0.5em}
\begin{tikzpicture}

\begin{axis}[%
width=2.5in,
height=1.5in,
at={(-0.289in,4.95in)},
scale only axis,
bar shift auto,
xmin=0.511111111111111,
xmax=9.48888888888889,
xtick={1,2,3,4,5,6,7,8,9},
xticklabels={$1$, $2$, $4$, $5$, $6$, $8$, $9$, $10$, $12$},
xlabel style={font=\color{black}},
xlabel={\small{node}},
ymin=0,
ymax=93,
ytick={ 0,  30,  60, 90},
yticklabels={ $0$, $30$, $60$, $90$},
ylabel style={font=\color{black}},
ylabel={\small{inertia}},
ymajorgrids,
ylabel shift=-5pt,
axis background/.style={fill=white},
yticklabel style = {font=\footnotesize,xshift=0ex},
xticklabel style = {font=\footnotesize,yshift=0ex},
no markers,
every axis plot/.append style={ultra thin},
legend entries={$\underline{m}$, $m_\text{rob}$, $m_\text{uni}$},
legend style={legend cell align=left, align=left, draw=none, font=\small},
legend style={at={(0.35,0.95)}, anchor=north east}
]

\addplot[ybar, bar width=4, fill=mycolor1, draw=black, area legend] table[row sep=crcr] {%
1	37.24225668\\
2	12.41408556\\
3	7.219268219\\
4	35.38014385\\
5	35.38014385\\
6	12.73239545\\
7	37.24225668\\
8	37.24225668\\
9	19.98986085\\
};
\addlegendentry{$\, \underline{m}$}

\addplot[ybar, bar width=4, fill=mycolor2, draw=black, area legend] table[row sep=crcr] {%
1	48.465\\
2	42.172\\
3	31.08\\
4	78.18\\
5	78.021\\
6	23.173\\
7	39.826\\
8	37.364\\
9	26.705\\
};
\addlegendentry{$\, m_\text{rob}^*$}

\addplot[ybar, bar width=4, fill=mycolor3, draw=black, area legend] table[row sep=crcr] {%
1	45\\
2	45\\
3	45\\
4	45\\
5	45\\
6	45\\
7	45\\
8	45\\
9	45\\
};
\addlegendentry{$\, m_{\textup{uni}}$}

\end{axis}

\begin{axis}[%
width=0.27in,
height=1.5in,
at={(2.58in, 4.95in)},
scale only axis,
bar shift auto,
xmin=0.4,
xmax=1.7,
xtick={\empty},
ymin=0,
ymax=0.28,
ytick={0, 0.09,  0.18, 0.27},
yticklabels={$0$, $0.09$,  $0.18$, $0.27$},
ylabel style={font=\color{black}},
ylabel={\small{cost}},
ymajorgrids,
ylabel shift=-9pt,
axis background/.style={fill=white},
yticklabel style = {font=\footnotesize,xshift=0ex},
xticklabel style = {font=\footnotesize,yshift=0ex},
no markers,
every axis plot/.append style={ultra thin},
legend style={legend cell align=left, align=left, draw=black}
]
\addplot[ybar, bar width=4, fill=mycolor1, draw=black, area legend] table[row sep=crcr] {%
1	0.2469\\
};

\addplot[ybar, bar width=4, fill=mycolor2, draw=black, area legend] table[row sep=crcr] {%
1.05	0.1594\\
};

\addplot[ybar, bar width=4, fill=mycolor3, draw=black, area legend] table[row sep=crcr] {%
1.1	0.1961\\
};
\end{axis}

\end{tikzpicture}%
\caption{Robust inertia allocation subject to budget constraint \eqref{eq:MinOp1b}.}
\label{fig:robust2}
\end{subfigure}

\begin{subfigure}[t]{0.49\textwidth}
\centering
%

\definecolor{mycolor2}{HTML}{253F5B}%
\definecolor{mycolor1}{HTML}{D59B2D}%
\begin{tikzpicture}

\begin{axis}[%
width=2.58in,
height=1.59in,
at={(1.17in,1.077in)},
scale only axis,
xmin=0,
xmax=0.0003,
xmajorticks=false,
every outer y axis line/.append style={black},
every y tick label/.append style={font=\color{black}},
every y tick/.append style={black},
ymin=-1,
ymax=11,
ytick={ 0,  2,  4,  6,  8, 10},
ylabel={\small{Cardinality}},
axis background/.style={fill=white},
yticklabel style = {font=\footnotesize,xshift=0ex},
xticklabel style = {font=\footnotesize,yshift=0ex},
]

\addplot[const plot, color=mycolor1, dotted, line width=2.5pt, forget plot] table[row sep=crcr] {%
0	2\\
0.000199999999999978	1\\
0.000399999999999956	1\\
};

\addplot[const plot, color=mycolor2, dotted, line width=2.5pt, forget plot] table[row sep=crcr] {%
0	9\\
9.99999999962142e-06	8\\
3.00000000006406e-05	7\\
6.99999999991263e-05	6\\
8.00000000005241e-05	4\\
9.99999999997669e-05	3\\
0.000109999999999388	5\\
0.000120000000000786	3\\
0.000140000000000029	2\\
0.00014999999999965	1\\
0.000310000000000699	1\\
};

\end{axis}

\begin{axis}[%
width=2.58in,
height=1.59in,
at={(1.17in,1.077in)},
scale only axis,
xmin=0,
xmax=0.0003,
every y tick label/.append style={font=\color{black}},
every y tick/.append style={black},
ymin=-10,
ymax=110,
ytick={0,  20,  40,  60,  80, 100},
ylabel={\small{Relative Performance Loss [\%]}},
xtick={ 0, 0.5e-4, 1e-4, 1.5e-4, 2e-4, 2.5e-4, 3e-4},
xticklabels={ $0$, $0.5$, $1$, $1.5$, $2$, $2.5$, $3$},
xmajorgrids,
xlabel={\small{$\gamma\, (10^{-4})$}},
ymajorgrids,
yticklabel style = {font=\footnotesize,xshift=0ex},
xticklabel style = {font=\footnotesize,yshift=0ex},
ylabel near ticks, yticklabel pos=right,
legend entries={$\text{localized, card}$, $\text{uniform, card}$, $\text{localized, perf}$, $\text{uniform, perf}$},
legend style={legend cell align=left, align=left, draw=none, font=\small},
legend style={at={(1,0.27)},anchor=south east}
]

\addlegendimage{no markers, mycolor1, dotted, ultra thick}
\addlegendimage{no markers, mycolor2, dotted, ultra thick}
\addlegendimage{no markers, mycolor1, ultra thick}
\addlegendimage{no markers, mycolor2, ultra thick}

\addplot[const plot, color=mycolor1, line width=2.5pt] table[row sep=crcr] {%
0	0\\
0.000200000000000006	0.0734773241000624\\
0.000399999999999998	0.0734773241000624\\
};

\addplot[const plot, color=mycolor2, line width=2.5pt] table[row sep=crcr] {%
0	0\\
1.00000000031741e-05	0.0338174009277736\\
2.99999999953116e-05	1.3147444386012\\
6.99999999937972e-05	8.85228336707502\\
7.99999999969714e-05	33.540299195011\\
0.00010000000000332	57.9217266636287\\
0.000110000000006494	31.5530121701586\\
0.000119999999995457	57.9217266636287\\
0.000140000000001805	63.2090665549401\\
0.000150000000004979	80.2189677908254\\
0.000309999999998922	80.2189677908254\\
};
\end{axis}

\end{tikzpicture}%
\caption{\small{Relative performance loss (\%) as a function of penalty $\gamma$ with capacity constraints. 0\%, 100\% correspond to the optimal allocation, no additional allocation respectively.}}
\label{fig:gridN}
\end{subfigure}%
\hfill \begin{subfigure}[t]{0.49\textwidth}
\centering
%
%
\definecolor{mycolor1}{rgb}{0.955,0.955,0.955}%
\definecolor{mycolor2}{rgb}{0.49412,0.49412,0.49412}%
\definecolor{mycolor3}{HTML}{253F5B}
\definecolor{mycolor4}{HTML}{D59B2D}
\hspace*{-2em}
\begin{tikzpicture}

\begin{axis}[%
width=2.58in,
height=1.59in,
at={(1.17in,1.077in)},
scale only axis,
xmin=-0.1905,
xmax=0.015,
xlabel style={font=\color{black}},
xlabel={\small{Real Axis}},
ymin=-3.425,
ymax=3.425,
ytick={-3, -1.5, 0, 1.5, 3},
xtick={-0.18, -0.15, -0.12, -0.09, -0.06, -0.03, 0},
ylabel style={font=\color{black}},
ylabel={\small{Imaginary Axis}},
axis background/.style={fill=white},
xmajorgrids,
ymajorgrids,
yticklabel style = {font=\footnotesize,xshift=0ex},
xticklabel style = {font=\footnotesize,yshift=0ex},
legend entries={$\textit{cone}$, $\underline{m}$, $m_{\textup{uni}}$, $m^*$},
legend style={legend cell align=left, align=left, draw=none, font=\small},
legend style={at={(0.3,0.7)}, anchor=north east}
]

\addplot[area legend, line width=1.5pt, draw=mycolor1, fill=mycolor1]
table[row sep=crcr] {%
x	y\\
-0.19	-3.4\\
-0.1003	-3.4\\
0	0\\
-0.1003	3.4\\
-0.19	3.4\\
}--cycle;
\addlegendentry{cone}

\addplot [color=mycolor2, line width=1.5pt, draw=none, mark size=1.5pt, mark=square, mark options={solid, mycolor2}]
  table[row sep=crcr]{%
-0.177346386981727	3.14877210542865\\
-0.177346386981727	-3.14877210542865\\
-0.167876787481245	2.33789771430358\\
-0.167876787481245	-2.33789771430358\\
-0.155567017804815	1.82944164651823\\
-0.155567017804815	-1.82944164651823\\
-0.116581901167269	1.33283558374031\\
-0.116581901167269	-1.33283558374031\\
-0.0291860962831962	0.985141027472319\\
-0.0291860962831962	-0.985141027472319\\
-0.0381312233097071	0.930923193933396\\
-0.0381312233097071	-0.930923193933396\\
-0.0445352858043671	0.470763592612919\\
-0.0445352858043671	-0.470763592612919\\
-0.0416297939453227	0.432996864507253\\
-0.0416297939453227	-0.432996864507253\\
-0.120622191727754	0\\
-0	0\\
};
\addlegendentry{$\, \underline{m}$}

\addplot [color=mycolor3, line width=1.5pt, draw=none, mark size=1.5pt, mark=o, mark options={solid, mycolor3}]
  table[row sep=crcr]{%
-0.0425987224622506	1.41358773766826\\
-0.0425987224622506	-1.41358773766826\\
-0.0489854002347561	1.38914257110187\\
-0.0489854002347561	-1.38914257110187\\
-0.0435092807422623	1.27418368087978\\
-0.0435092807422623	-1.27418368087978\\
-0.0286748638845638	0.814250404778329\\
-0.0286748638845638	-0.814250404778329\\
-0.0246761094701566	0.854190995741252\\
-0.0246761094701566	-0.854190995741252\\
-0.0291704845971983	0.845725749028127\\
-0.0291704845971983	-0.845725749028127\\
-0.0304641344547854	0.36291166098301\\
-0.0304641344547854	-0.36291166098301\\
-0.0296431850430912	0.369562972478171\\
-0.0296431850430912	-0.369562972478171\\
-4.44089209850063e-16	0\\
-0.0695756053126537	0\\
};
\addlegendentry{$m_{\textup{uni}}$}

\addplot [color=mycolor4, line width=1.5pt, draw=none, mark size=1.5pt, mark=diamond, mark options={solid, mycolor4}]
  table[row sep=crcr]{%
-0.168339759383079	2.34928435294055\\
-0.168339759383079	-2.34928435294055\\
-0.129408706665423	1.81369560948562\\
-0.129408706665423	-1.81369560948562\\
-0.154793777408708	1.83434584316143\\
-0.154793777408708	-1.83434584316143\\
-0.0287430867900644	0.986777815648268\\
-0.0287430867900644	-0.986777815648268\\
-0.0376784322504808	0.934736626686396\\
-0.0376784322504808	-0.934736626686396\\
-0.0222215099851133	0.787836676392018\\
-0.0222215099851133	-0.787836676392018\\
-0.0400238173224161	0.434771378331579\\
-0.0400238173224161	-0.434771378331579\\
-0.0349770446932967	0.360314776773839\\
-0.0349770446932967	-0.360314776773839\\
1.33226762955019e-15	0\\
-0.0701998863917703	0\\
};
\addlegendentry{$m^*$}

\end{axis}
\end{tikzpicture}%
\caption{\small{The eigenvalue spectrum of the state matrix $A$ for different inertia profiles, where $m^{*}$ has been optimized for a localized disturbance at node 4.}}
\label{fig:grid8}
\end{subfigure}
\caption{Optimal inertia allocations, performance comparison for the test case in Figure~\ref{fig_sim}, under different scenarios.}
\label{fig: allocations}
\end{figure*}


\begin{figure*}[!tb]
  \centering
      \input{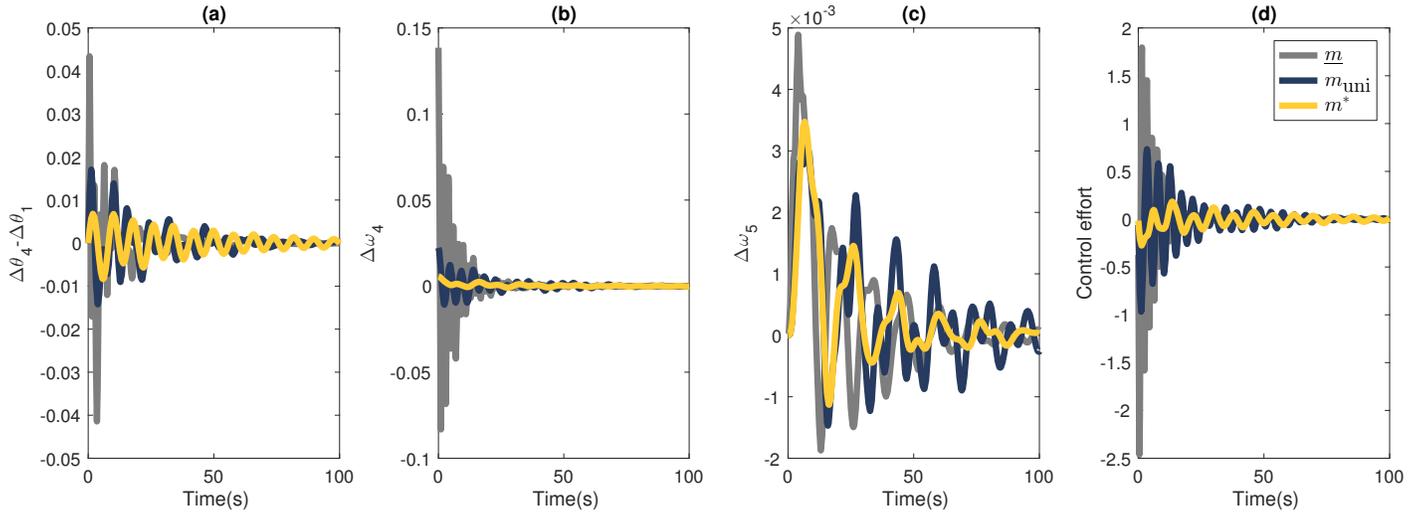}
\caption{Time-domain plots for angle differences, frequencies, and control effort $m_4 \ddot \theta_4$ for a localized disturbance at node 4.} 
\label{fig:grid7}
\end{figure*}

We draw the following conclusions from  the above test cases {-- some of which are perhaps surprising and counterintuitive.}
\begin{enumerate}[label=(\alph*), leftmargin=0.1cm, itemindent=0.5cm]
\setlength{\itemsep}{2pt}

\item Our locally optimal solution achieves the best performance among the different heuristics in all scenarios; see Figure~\ref{fig: allocations}. 

\item For uniform disturbances with capacity constraints on the individual buses, the optimal solution does not correspond to allocating the maximum possible inertia at every bus (Figure~\ref{fig:grid1}). When only a total budget constraint is present, the optimal solution is remarkably different from the uniform allocation of inertia at the different nodes (Figure~\ref{fig:grid2}). For both scenarios, the performance improvement with respect to the initial allocation and the different heuristics is modest, and confirms the intuition developed for the {two-area} case (Section~\ref{subsec: two-area}) regarding the flatness of the cost function.

\item In stark contrast, a localized disturbance results in adding inertia dominantly to the disturbed node (node 4) as an optimum choice. The latter is also in line with the results presented for the two-area case and the closed-form results in Theorem~\ref{theorem:  Performance index for primary control cost}. Furthermore, an additional inertia allocation to all other (undisturbed) nodes may be detrimental for the performance, as shown in Figure~\ref{fig:grid3}.

\item The robust allocation approach proposed in Section~\ref{Subsection: Robustness} is investigated in Figures~\ref{fig:robust1} and \ref{fig:robust2}. The ensuing inertia profiles in addition to being robust to disturbance location, also result in a significantly lower worst-case cost compared to the heuristic allocations. We also observe a generally flatter optimal inertia profile, in tune with the conclusions following the re-formulation \eqref{eq:genminprobfb-3} developed for another cost function.

\item The sparsity-promoting approach proposed in Section~\ref{Subsection: l1_norm} is examined in Figure~\ref{fig:gridN}. For a uniform disturbance without a sparsity penalty, inertia is allocated at all nine buses of the network. A modest penalty of $\gamma =6e{-5}$, however, yields an allocation at only seven buses with a mere 1.3\% degradation in performance. For sparser allocations, the performance loss becomes more relevant.
The optimal allocation is inherently sparser in the case of localized disturbances. Even without a sparsity penalty, virtual inertia would be assigned to only buses 4 and 6, when the disturbance is at node 4. An allocation exclusively at bus 4 (economically preferable), with negligible performance loss can be arrived at, with a penalty of $\gamma > 2e{-4}$.

\item Figure~\ref{fig:grid7} shows the time-domain responses to a localized impulse at node 4, modeling a post-fault condition. Subfigure (a) (respectively, (b)) 
shows that the optimal inertia allocation according to the proposed $\mathscr H_2$ performance criteria is also superior in terms of frequency overshoot and angle differences (respectively, frequencies). Subfigure (c) displays the frequency response at node 5 of the system. Note from the scale of this plot that the deviations are insignificant. Similar comments also apply to all other signals which are not displayed here. Finally, Subfigure (d) shows the control effort $m_4 \cdot \ddot{\theta}_{4}$ expended by the virtual inertia emulation at the disturbed bus. Perhaps surprisingly, we observe that the optimal allocation $m=m^*$ requires the least control effort.

\item Figure~\ref{fig:grid8} plots the eigenvalue spectrum for  different inertia profiles. 
The initial inertia profile $\underline m$, marginally outperforms other allocations with respect to both the best damping asymptote (most damped nonzero eigenvalues) as well as the best damping ratio (narrowest cone). As is apparent from the plots in Figure~\ref{fig:grid7}, this case also leads to inferior time-domain performance compared to the optimal allocation $m^{*}$, which has slightly poorer damping asymptote and ratio. These observations reveal that the spectrum holds only partial information, and advocate the use of the $\mathscr H_{2}$ norm as opposed to spectral performance metrics (as in \cite{TSB-TL-DJH:15}).

\end{enumerate}

\section{Conclusions}
\label{Section: Conclusions}

We considered the problem of placing virtual inertia in power grids based on an $\mathscr{H}_2$ norm performance metric reflecting network coherency. This formulation gave rise to a large-scale and non-convex optimization program. For certain cost functions, problem instances, and in the low-dimensional {two-area} case, we could derive closed-form solutions yielding some, possibly surprising insights. Next, we developed a computational approach based on an explicit gradient formulation and validated our results on a three-region network. Suitable time-domain simulations demonstrated the efficacy of our locally optimal inertia allocations over intuitive heuristics.
We also examined the  problem of allocating a finite number of virtual inertia units via a sparsity-promoting regularization.\\ 
Our computational and analytic results are well aligned and suggest insightful strategies for the optimal allocation of virtual inertia. Contrary to popular belief, it is the location of disturbance and the placement of inertia in the grid, rather than the total inertia in a power system that dictates its resilience.
We envision that these results will find application in stabilizing low-inertia grids through strategically placed virtual inertia units. As part of our future work, we consider the extension to more detailed system models and specifications as well as a comparison with the results in \cite{TSB-TL-DJH:15}.

\section*{Acknowledgements}
 The authors wish to thank Mihailo Jovanovic, Andreas Ulbig, Theodor Borsche, Dominic Gro\ss{}, and Ulrich M\"unz for their comments on the problem setup and analysis methods.
\bibliographystyle{IEEEtran}

\end{document}